\documentclass[11pt]{article}

\usepackage{amsmath,amssymb,amsthm,amsfonts}
\usepackage{geometry}
\usepackage{enumitem}
\usepackage[hidelinks]{hyperref}

\theoremstyle{plain}
\newtheorem{theorem}{Theorem}[section]
\newtheorem{lemma}[theorem]{Lemma}
\newtheorem{corollary}[theorem]{Corollary}
\newtheorem{proposition}[theorem]{Proposition}

\theoremstyle{definition}
\newtheorem{definition}[theorem]{Definition}

\theoremstyle{remark}
\newtheorem{remark}[theorem]{Remark}

\numberwithin{equation}{section}

\newcommand{\diag}{\operatorname{diag}}
\newcommand{\supp}{\operatorname{supp}}
\newcommand{\e}{\mathrm e}

\title{Entrywise Positivity Preservers on Green Matrices}
\author{Wei Xie}
\date{}

\begin{document}
	
	\maketitle
	
	\begin{abstract}
		We classify the entrywise functions that preserve positive
		semidefiniteness on discrete Green matrices
		\(G(p,q)=(p_{\min(i,j)}q_{\max(i,j)})\) with positive parameters, without
		requiring the resulting matrix to retain Green structure.  For matrices of
		all orders, the preservers are the zero function and the functions
		\(f(t)=\int_{[0,\infty)}t^\alpha\,d\mu(\alpha)\), where \(\mu\) is a nonzero
		finite positive measure and the integral is finite for every \(t>0\).
		Requiring the resulting matrix to be totally nonnegative reduces the
		nonzero preservers to \(f(t)=ct^\alpha\), where \(c>0\) and
		\(\alpha\ge0\).  These power functions also preserve positive semidefinite
		Green structure, while strict Green structure is preserved precisely when
		\(\alpha>0\).  No regularity assumption is needed for these
		classifications.  We also characterize continuously differentiable
		functions that are entrywise Loewner monotone on every fixed-\(q\) Green
		family: this holds precisely when \(f'\) is a positive mixture of
		nonnegative real powers, with the zero measure allowed.
	\end{abstract}

\noindent
\textbf{Keywords:}
Green matrices; entrywise preservers; positive semidefinite matrices;
Hankel kernels; Hadamard powers.

	\medskip
	
	\noindent
	\textbf{2020 Mathematics Subject Classification:}
	15A86, 15B48, 44A10.
	
	\section{Introduction}\label{sec:introduction}
	
	We study entrywise functions on the class of positive-parameter discrete
	Green matrices
	\begin{equation}\label{eq:intro-green}
		G(p,q)
		=
		\bigl(g_{ij}\bigr)_{i,j=1}^n,
		\qquad
		g_{ij}
		=
		p_{\min(i,j)}q_{\max(i,j)},
		\qquad
		p_i,q_i>0.
	\end{equation}
	Equivalently,
	\begin{equation}\label{eq:intro-green-entry}
		g_{ij}
		=
		\begin{cases}
			p_iq_j,&i\le j,\\
			p_jq_i,&j<i.
		\end{cases}
	\end{equation}
	Such matrices are discrete analogues of Green kernels for second-order
	boundary-value problems~\cite{OlshevskyStrangZhlobich2010}.  They also occur
	as inverses of irreducible tridiagonal \(Z\)-matrices and form a subclass of
	inverse \(M\)-matrices
	\cite{Markham1972,McDonaldNabbenNeumannSchneiderTsatsomeros1998,Nabben2001}.
	Their relation to oscillation theory and total positivity is classical; see
	\cite{GantmacherKrein,Karlin,Pinkus}.
	For the matrices in \eqref{eq:intro-green}, the monotonicity of the ratios
	\(p_i/q_i\) controls both positive semidefiniteness and total
	nonnegativity.
	
	Several algebraic and numerical properties of Green matrices have been
	studied.  Pinkus~\cite[Theorem~4.2]{Pinkus} gives the
	total-nonnegativity criterion and formulas governing the minors.  Carmona,
	Encinas, and Mitjana~\cite{CarmonaEncinasMitjana2015} use Green matrices in
	the analysis of generalized linear polyominoes.  Olshevsky, Strang, and
	Zhlobich~\cite{OlshevskyStrangZhlobich2010} study
	characteristic-polynomial recurrences, structured factorizations, and
	fast inversion.  Delgado, Pe\~na, and Pe\~na derive formulas and algorithms
	for arbitrary minors, obtain bidiagonal decompositions for Green matrices
	and their Hadamard products, and use these decompositions for high-relative-
	accuracy computations
	\cite{DelgadoPenaPena2023Axioms,DelgadoPenaPena2023AML}.  Their subsequent
	work extends the bidiagonal and numerical analysis to nonsymmetric
	generalized Green matrices \cite{DelgadoPenaPena2024}.  Guillot and
	Wu~\cite{GuillotWu2019} proved that a GCD matrix is totally nonnegative if
	and only if it is a Green matrix, and gave sufficient conditions under
	which \(h\circ\gcd\) remains totally nonnegative.  Their results concern
	the arithmetic GCD subclass, whereas we consider arbitrary
	positive-parameter Green matrices.

	We consider functions \(f:(0,\infty)\to\mathbb R\).  For a matrix
	\(A=(a_{ij})\) with positive entries, write
	\begin{equation}\label{eq:entrywise-map}
		f[A]=\bigl(f(a_{ij})\bigr).
	\end{equation}
	We ask first for all functions satisfying
	\begin{equation}\label{eq:PSD-only-problem}
		G\succeq0,\quad G\text{ a positive-parameter Green matrix}
		\quad\Longrightarrow\quad
		f[G]\succeq0.
	\end{equation}
	Condition~\eqref{eq:PSD-only-problem} does not require \(f[G]\) to be a
	Green matrix. 	For example,
	\[
	f(t)=1+t
	\]
	does not generally preserve Green structure.  Nevertheless, for every
	positive semidefinite matrix \(G\),
	\[
	f[G]=\mathbf1\mathbf1^T+G\succeq0,
	\]
	where \(\mathbf{1}\) denotes the all-ones column vector, so \(\mathbf{1}\mathbf{1}^T\) is the all-ones matrix.
	This entrywise image is generally not a Green matrix.  Hence preservation
	of positive semidefiniteness on the Green class does not imply preservation
	of Green structure.
	
	Related classification results are known for positive semidefinite matrices,
	Hankel matrices and kernels, moment sequences, and totally nonnegative
	matrices; see
	\cite{FitzGeraldHorn,GuillotKhareRajaratnam,
		BeltonGuillotKharePutinar2022,
		BeltonGuillotKharePutinar2023}.  Belton, Guillot, Khare, and Putinar
	classify transforms of moment sequences and totally nonnegative Hankel
	matrices in \cite{BeltonGuillotKharePutinar2022}.  In
	\cite{BeltonGuillotKharePutinar2023} they study continuous Hankel kernels,
	P\'olya frequency functions and sequences, and more general TN and TP
	kernels.  Their Proposition~7.4 identifies continuous PSD Hankel kernels
	with exponential moments; this representation is used in
	Section~\ref{sec:PSD-only}.  Here the inputs are restricted to the Green
	cone, while the images are required only to be positive semidefinite.
	Hiai~\cite{Hiai2009} characterized entrywise functions that are monotone in
	the Loewner order on the full positive semidefinite cone.  We study the
	corresponding question on the convex Green families obtained by fixing one
	of the two parameter sequences.
	
	Positive Hadamard powers of a Green matrix satisfy
	\[
	G(p,q)^{\circ\alpha}
	=
	G(p^\alpha,q^\alpha),
	\qquad \alpha>0.
	\]
	Thus positive semidefinite Green matrices are infinitely divisible in the
	entrywise sense; see Bhatia~\cite{Bhatia2006} for background.  For
	comparison, Dellacherie,
	Martinez, and San Martin
	\cite{DellacherieMartinezSanMartin2009} proved that the Hadamard
	\(\alpha\)-power of every inverse \(M\)-matrix remains an inverse
	\(M\)-matrix for \(\alpha\ge1\).  We require only PSD output on the smaller
	Green class, not preservation of the inverse-\(M\)-matrix structure.  These
	results motivate the PSD-preserver problem \eqref{eq:PSD-only-problem}.

	Besides \eqref{eq:PSD-only-problem}, we consider Loewner monotonicity on
	fixed-parameter Green families, totally nonnegative output,
	positive definite output on strict Green matrices, and preservation of
	Green structure, with the ratios \(p_i/q_i\) required to be nondecreasing
	or strictly increasing.  Except in the Loewner-monotonicity problem, where
	we assume that \(f\) is continuously differentiable, we impose no a priori
	regularity on \(f\).  We also ask which
	matrix orders are needed to determine the corresponding all-dimensions
	preserver class.
	
	Section~\ref{sec:preliminaries} records the classical Green-matrix and
	Hankel-kernel tools used below.  Section~\ref{sec:PSD-only} treats PSD
	output, and Section~\ref{sec:Loewner-fixed-q} treats Loewner monotonicity.
	Sections~\ref{sec:TN-output} and \ref{sec:strict-PD} consider TN and
	positive-definite output.  Section~\ref{sec:structure-preservers} considers entrywise maps
	whose images remain Green matrices, and Section~\ref{sec:conclusion}
	collects concluding remarks and open directions.
	
	\section{Terminology and preliminaries}\label{sec:preliminaries}
	
	All matrices are real.  For a symmetric matrix \(A\), we write
	\[
	A\succeq0
	\quad\text{and}\quad
	A\succ0
	\]
	for positive semidefiniteness and positive definiteness, respectively. 
	For symmetric matrices \(A\) and \(B\) of the same order, we write
	\[
	A\succeq B
	\]
	if \(A-B\succeq0\). This partial order is called the Loewner order.

	 If
	all entries of \(A=(a_{ij})\) are nonnegative, we write
	\[
	A\ge_{\mathrm e}0.
	\]
	
	A matrix is \emph{totally nonnegative} (TN) if all its minors are
	nonnegative, and \emph{totally positive} (TP) if all its minors are
	positive.

	For a matrix \(A=(a_{ij})\) with positive entries and \(\alpha\ge0\), set
	\begin{equation}\label{eq:Hadamard-power}
		A^{\circ\alpha}
		=
		\bigl(a_{ij}^{\alpha}\bigr),
	\end{equation}
	with the convention
	\begin{equation}\label{eq:zero-Hadamard-power}
		A^{\circ0}
		=
		\mathbf1\mathbf1^T.
	\end{equation}
	
	\subsection{Continuous positive semidefinite Hankel kernels}
	
	Let \(X\subseteq\mathbb R\) be an interval, and write
	\[
	X+X:=\{x+y:x,y\in X\}.
	\]
	A kernel \(K:X\times X\to\mathbb R\) is called a Hankel kernel if there
	exists a function
	\[
	g:X+X\to\mathbb R
	\]
	such that
	\begin{equation}\label{eq:Hankel-kernel}
		K(x,y)=g(x+y),
		\qquad x,y\in X.
	\end{equation}
	We refer to \(g\) as the generating function of \(K\). The kernel \(K\) is
	called positive semidefinite if, for every \(n\ge1\) and every
	\(x_1,\ldots,x_n\in X\), the matrix
	\[
	\bigl(K(x_i,x_j)\bigr)_{i,j=1}^n
	=
	\bigl(g(x_i+x_j)\bigr)_{i,j=1}^n
	\]
	is positive semidefinite.

	We use the following representation.

	\begin{lemma}[{\cite[Proposition~7.4]{BeltonGuillotKharePutinar2023}}]
		\label{lem:Hankel-representation}
		Let \(X\subseteq\mathbb R\) be an open interval, and let
		\(K:X\times X\to\mathbb R\) be a Hankel kernel with generating function
		\(g:X+X\to\mathbb R\), so that
		\[
		K(x,y)=g(x+y),
		\qquad x,y\in X.
		\]
		Suppose that \(K\) is continuous on \(X\times X\). Then \(K\) is
		positive semidefinite if and only if there exists a positive Borel measure
		\(\mu\) on \(\mathbb R\) such that
		\begin{equation}\label{eq:Hankel-representation}
			K(x,y)
			=
			\int_{\mathbb R}
			\e^{(x+y)\alpha}\,d\mu(\alpha),
			\qquad x,y\in X,
		\end{equation}
		and the displayed integral is finite for all \(x,y\in X\).
	\end{lemma}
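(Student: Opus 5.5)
The easy direction comes first. Suppose \(K\) has the representation \eqref{eq:Hankel-representation} with the integral finite on \(X\times X\). Take points \(x_1,\ldots,x_n\in X\) and a vector \(c\in\mathbb R^n\). Then
\[
\sum_{i,j}c_ic_jK(x_i,x_j)=\int_{\mathbb R}\Bigl(\sum_i c_i\e^{x_i\alpha}\Bigr)^2\,d\mu(\alpha)\ge0.
\]
Interchanging the finite sum with the integral is legitimate because each term is integrable. Hence \(K\) is positive semidefinite; continuity of \(K\) is not needed for this direction.

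For the converse, I reduce to a Bernstein--Widder type statement for exponentially convex functions. Write \(X=(a,b)\), so that \(X+X=(2a,2b)\), and put \(m=a+b\). The steps are as follows.

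First, \(g\) is continuous on \(X+X\). Any \(s\in X+X\) can be written locally as \(s=x+y\) with \(x,y\in X\) depending continuously on \(s\), for instance \(x=y=s/2\). So continuity of \(g\) follows from continuity of \(K\).

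Second, \(g\) is \emph{exponentially convex} on \(X+X\). This means that for all \(s_1,\ldots,s_n\) with \(s_i+s_j\in X+X\) and all real \(c_i\), we have \(\sum c_ic_jg(s_i+s_j)\ge0\). Taking \(s_i=x_i\in X\) gives exactly positive semidefiniteness of \(K\).

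Third, apply Bernstein's theorem on exponentially convex functions (Widder, \emph{The Laplace Transform}, Ch.~VI; Akhiezer). A continuous function \(\phi\) on an open interval \(I\) with \(\phi\bigl((u+v)/2\bigr)\)-kernel positive semidefinite in \(u,v\in I\) has the form \(\phi(u)=\int\e^{u\alpha}\,d\nu(\alpha)\) for a positive measure \(\nu\), with the integral finite on \(I\). Setting \(\phi=g\) on \(I=X+X\) and \(u=x+y\) yields \eqref{eq:Hankel-representation}.

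If I must prove the third step rather than cite it, I would proceed in four stages. First, recentre so that the kernel becomes \(g(m+x'+y')\) on a symmetric interval \((-r,r)\). Second, show that \(g(m+\cdot)\) is positive definite in the semigroup sense, and that for fixed \(t\) the function \(h\mapsto g(m+t+ih)\) would extend analytically. More concretely, I would verify that the translates \(\phi_\delta(u)=g(u+\delta)\cdots\) remain exponentially convex. Third, mollify \(g\) by convolution with a positive smooth bump; the result is a \(C^\infty\) exponentially convex function. Positive semidefiniteness of Hankel matrices of derivatives \(\bigl(g^{(i+j)}(u)\bigr)\), obtained as limits of divided differences, then gives absolutely monotone-type bounds, so \(g\) is real analytic. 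A Hamburger moment argument then yields a measure at each point, and these measures are compatible across points. Fourth, pass to the limit in the mollification parameter using Helly's selection theorem.

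The main obstacle is the third step, and in particular the passage to the limit. The measures must be shown to be tight enough that the exponential integrals converge on all of \(X+X\), not merely at one point. Here I would use the bound
\[
\int\e^{u\alpha}\,d\nu_\varepsilon\le g_\varepsilon(u')\quad\text{for }u'\text{ near the endpoints},
\]
uniformly in \(\varepsilon\), to control the tails: the tails in \(\alpha\to+\infty\) are controlled by values of \(g\) at points near \(2b\), and the tails in \(\alpha\to-\infty\) by values near \(2a\). Finiteness at the given points then follows by Fatou's lemma. Since the statement is quoted from \cite[Proposition~7.4]{BeltonGuillotKharePutinar2023}, in practice I would present the easy direction in full and cite Bernstein's theorem for the converse.
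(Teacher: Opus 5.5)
Your proposal is correct and takes the same route as the paper, which gives no proof of its own but imports the result from Proposition~7.4 of Belton--Guillot--Khare--Putinar, i.e.\ the Bernstein--Widder theorem on exponentially convex functions in Hankel-kernel form. Your easy direction is complete, and your reduction is exactly what is needed to invoke that theorem: continuity of \(g\) comes from \(g(s)=K(s/2,s/2)\), and exponential convexity of \(g\) on \(X+X\) is precisely positive semidefiniteness of \(K\), since admissible points satisfy \(2s_i\in X+X\) and hence \(s_i\in X\). Your optional self-contained sketch of the third step is much looser; for example, the recentring at \(m=a+b\) is undefined for unbounded \(X\) such as \(X=\mathbb R\), which is the case actually used in the paper. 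Since you rely on the citation, this does not affect correctness.
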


	\subsection{Basic factorizations of Green matrices}
	
	\begin{definition}\label{def:green}
		Let
		\[
		p=(p_1,\ldots,p_n),
		\qquad
		q=(q_1,\ldots,q_n)
		\]
		be positive sequences.  Define
		\begin{equation}\label{eq:green-definition}
			G(p,q)
			=
			\bigl(g_{ij}\bigr)_{i,j=1}^n,
			\qquad
			g_{ij}
			=
			p_{\min(i,j)}q_{\max(i,j)}.
		\end{equation}
	\end{definition}
	
	Set
	\begin{equation}\label{eq:green-ratio}
		r_i=\frac{p_i}{q_i},
		\qquad i=1,\ldots,n,
	\end{equation}
	and
	\begin{equation}\label{eq:green-increments}
		r_0=0,
		\qquad
		\Delta_i=r_i-r_{i-1},
		\qquad i=1,\ldots,n.
	\end{equation}
	Write
	\begin{equation}\label{eq:green-Dq}
		D_q=\diag(q_1,\ldots,q_n),
	\end{equation}
	and let \(L=(\ell_{ij})\) be the lower triangular summation matrix
	\begin{equation}\label{eq:green-L}
		\ell_{ij}
		=
		\begin{cases}
			1,&i\ge j,\\
			0,&i<j.
		\end{cases}
	\end{equation}
	
	The following standard factorization fixes notation and gives the
	positivity criterion used below.
	
	\begin{proposition}
		\label{prop:green-factorization}
		For every positive-parameter Green matrix \(G(p,q)\),
		\begin{equation}\label{eq:green-congruence}
			G(p,q)
			=
			D_q A_{\min}(r)D_q,
		\end{equation}
		where
		\[
		A_{\min}(r)
		=
		\bigl(r_{\min(i,j)}\bigr)_{i,j=1}^n.
		\]
		Moreover,
		\begin{equation}\label{eq:green-LDL}
			G(p,q)
			=
			D_qL
			\diag(\Delta_1,\ldots,\Delta_n)
			L^TD_q.
		\end{equation}
	\end{proposition}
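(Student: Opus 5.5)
The plan is to verify both identities entrywise; each is a short direct computation.

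For \eqref{eq:green-congruence}, fix \(i,j\) and put \(m=\min(i,j)\), \(M=\max(i,j)\). The \((i,j)\) entry of \(D_qA_{\min}(r)D_q\) is
\[
q_i\,r_{m}\,q_j = q_m q_M \frac{p_m}{q_m} = p_m q_M = g_{ij}.
\]
Here we use \(\{q_i,q_j\}=\{q_m,q_M\}\) as a multiset. Since all \(q_k>0\), the division by \(q_m\) is legitimate.

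For \eqref{eq:green-LDL}, it suffices by \eqref{eq:green-congruence} to show
\[
A_{\min}(r)=L\,\diag(\Delta_1,\ldots,\Delta_n)\,L^T .
\]
Conjugating this identity by \(D_q\) then gives the claim.

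The \((i,j)\) entry of the right-hand side is \(\sum_k \ell_{ik}\Delta_k\ell_{jk}\). By \eqref{eq:green-L}, \(\ell_{ik}\ell_{jk}=1\) exactly when \(k\le i\) and \(k\le j\), that is, when \(k\le \min(i,j)\), and it is \(0\) otherwise. So the entry equals
\[
\sum_{k=1}^{\min(i,j)}\Delta_k .
\]
By \eqref{eq:green-increments} this sum telescopes to \(r_{\min(i,j)}-r_0=r_{\min(i,j)}\), which is the \((i,j)\) entry of \(A_{\min}(r)\).

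Neither step is a real obstacle. The only points needing care are the convention \(r_0=0\), which makes the telescoping start correctly, and the positivity of the \(q_i\), which justifies writing \(p_m=q_m r_m\).

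The factorization also yields the positivity criterion the paper will use next, which is worth noting without proving it here. Since \(D_qL\) is invertible, Sylvester's law of inertia shows that \(G(p,q)\succeq0\) iff every \(\Delta_i\ge0\), i.e.\ iff \(0\le r_1\le\cdots\le r_n\). Because \(r_1>0\) automatically, this reduces to the ratios \(p_i/q_i\) being nondecreasing. Likewise \(G(p,q)\succ0\) iff the ratios are strictly increasing.
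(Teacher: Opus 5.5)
Your proof is correct and matches the paper's argument: an entrywise check of \(D_qA_{\min}(r)D_q\) using \(p_m=q_mr_m\), then \(A_{\min}(r)=L\diag(\Delta_1,\ldots,\Delta_n)L^T\) by summing the increments \(\Delta_k\) up to \(\min(i,j)\), then substitution into the first identity. Your closing remark about positivity is also how the paper obtains Corollary~\ref{cor:green-positivity-criterion}: it uses the congruence of \(G(p,q)\) to \(\diag(\Delta_1,\ldots,\Delta_n)\), which is Sylvester's law of inertia.
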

	
	\begin{proof}
		For all \(i,j\), \eqref{eq:green-ratio} gives
		\begin{align*}
			\bigl(D_qA_{\min}(r)D_q\bigr)_{ij}
			&=
			q_iq_jr_{\min(i,j)}\\
			&=
			q_iq_j
			\frac{p_{\min(i,j)}}{q_{\min(i,j)}}\\
			&=
			p_{\min(i,j)}q_{\max(i,j)}\\
			&=
			g_{ij}.
		\end{align*}
		This proves \eqref{eq:green-congruence}.
		
		Also, \eqref{eq:green-increments} gives
		\[
		r_m=\sum_{k=1}^m\Delta_k,
		\]
		and hence
		\begin{align*}
			\left(
			L\diag(\Delta_1,\ldots,\Delta_n)L^T
			\right)_{ij}
			&=
			\sum_{k=1}^{\min(i,j)}\Delta_k\\
			&=
			r_{\min(i,j)}.
		\end{align*}
		Thus
		\[
		A_{\min}(r)
		=
		L\diag(\Delta_1,\ldots,\Delta_n)L^T.
		\]
		Substitution into \eqref{eq:green-congruence} proves
		\eqref{eq:green-LDL}.
	\end{proof}
	
\begin{corollary} \label{cor:green-positivity-criterion}
		Let \(G(p,q)\) be a positive-parameter Green matrix.  Then
		\begin{equation}\label{eq:green-PSD-equivalence}
			G(p,q)\succeq0
			\quad\Longleftrightarrow\quad
			0<r_1\le r_2\le\cdots\le r_n,
		\end{equation}
		and
		\begin{equation}\label{eq:green-PD-equivalence}
			G(p,q)\succ0
			\quad\Longleftrightarrow\quad
			0<r_1<r_2<\cdots<r_n.
		\end{equation}
\end{corollary}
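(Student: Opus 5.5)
The plan is to reduce both equivalences to the diagonal middle factor in the congruence \eqref{eq:green-LDL} of Proposition~\ref{prop:green-factorization}. Since all \(q_i>0\), the matrix \(D_q\) is invertible. The summation matrix \(L\) is lower triangular with unit diagonal, hence also invertible. Setting \(M=D_qL\), which is invertible, we have
\[
G(p,q)=M\,\diag(\Delta_1,\ldots,\Delta_n)\,M^T .
\]
Congruence by an invertible matrix preserves both positive semidefiniteness and positive definiteness, by Sylvester's law of inertia or directly via the substitution \(x\mapsto M^Tx\). It therefore follows that \(G(p,q)\succeq0\) if and only if \(\diag(\Delta_1,\ldots,\Delta_n)\succeq0\), and likewise \(G(p,q)\succ0\) if and only if \(\diag(\Delta_1,\ldots,\Delta_n)\succ0\).

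A diagonal matrix is positive semidefinite exactly when all its diagonal entries are nonnegative, and positive definite exactly when they are all positive. With the convention \(r_0=0\) from \eqref{eq:green-increments}, the condition \(\Delta_i\ge0\) for all \(i\) reads \(0=r_0\le r_1\le\cdots\le r_n\). Similarly, \(\Delta_i>0\) for all \(i\) reads \(0<r_1<\cdots<r_n\), which is exactly \eqref{eq:green-PD-equivalence}.

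For \eqref{eq:green-PSD-equivalence} there is one small point to check: the statement writes \(0<r_1\) rather than \(0\le r_1\). This is harmless, because \(r_1=p_1/q_1>0\) holds automatically for positive parameters, so \(\Delta_1=r_1\ge0\) is always satisfied with strict inequality. Hence the nonnegativity condition on the \(\Delta_i\) is equivalent to \(0<r_1\le r_2\le\cdots\le r_n\).

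The argument is essentially routine. The only step needing care is recording why congruence by \(M\) preserves (semi)definiteness in both directions: \(x^TGx=y^T\diag(\Delta)y\) with \(y=M^Tx\), and \(x\mapsto M^Tx\) is a bijection of \(\mathbb R^n\) that sends nonzero vectors to nonzero vectors.
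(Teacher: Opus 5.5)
Your proposal is correct and matches the paper's proof: both read the criteria off the congruence \eqref{eq:green-LDL} between \(G(p,q)\) and \(\diag(\Delta_1,\ldots,\Delta_n)\), and both use \(r_1>0\) to reach the stated form. Your explicit check that \(D_qL\) is invertible supplies the detail that the paper leaves implicit in the word ``congruent''.
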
	
	\begin{proof}
		By \eqref{eq:green-LDL}, \(G(p,q)\) is congruent to
		\(\diag(\Delta_1,\ldots,\Delta_n)\).  Therefore
		\[
		G(p,q)\succeq0
		\quad\Longleftrightarrow\quad
		\Delta_i\ge0
		\quad(i=1,\ldots,n),
		\]
		and
		\[
		G(p,q)\succ0
		\quad\Longleftrightarrow\quad
		\Delta_i>0
		\quad(i=1,\ldots,n).
		\]
		Since \(r_1>0\), these conditions are equivalent, respectively, to
		\[
		0<r_1\le\cdots\le r_n
		\]
		and
		\[
		0<r_1<\cdots<r_n.
		\]
	\end{proof}
	
	\begin{remark}\label{rem:green-TN-criterion}
		In our terminology, Pinkus~\cite[Theorem~4.2]{Pinkus} proves that
		\[
		G(p,q)\text{ is TN}
		\quad\Longleftrightarrow\quad
		0<r_1\le\cdots\le r_n.
		\]
		Together with the positivity criterion above, this yields
		\[
		G(p,q)\succeq0
		\quad\Longleftrightarrow\quad
		G(p,q)\text{ is TN}.
		\]
	\end{remark}
	
	\begin{definition}
		\label{def:green-classes}
		For \(n\ge1\), let \(\mathcal G_n^+\) denote the class of all
		positive-parameter positive semidefinite Green matrices of order \(n\).
		By Corollary~\ref{cor:green-positivity-criterion},
		\begin{equation}\label{eq:PSD-Green-class}
			\mathcal G_n^+
			=
			\left\{
			G(p,q):
			0<\frac{p_1}{q_1}
			\le\frac{p_2}{q_2}
			\le\cdots\le\frac{p_n}{q_n}
			\right\}.
		\end{equation}
		
		A positive-parameter Green matrix \(G(p,q)\) is called
		\emph{strict} if
		\begin{equation}\label{eq:strict-Green-ratios}
			0<\frac{p_1}{q_1}
			<\frac{p_2}{q_2}
			<\cdots
			<\frac{p_n}{q_n}.
		\end{equation}
		We denote the class of all strict Green matrices of order \(n\) by
		\(\mathcal G_n^{\mathrm{str}}\). By
		\eqref{eq:green-PD-equivalence}, every strict Green matrix is positive
		definite.
	\end{definition}
\begin{remark}\label{rem:positive-rank-one-green}
	Every positive rank-one matrix \(xx^T\), with \(x_i>0\), belongs to
	\(\mathcal G_n^+\), by taking \(p_i=q_i=x_i\) in
	Definition~\ref{def:green}.
\end{remark}

\begin{proposition}
	\label{prop:green-Hadamard-power}
	Let \(\alpha>0\).
	
	\begin{enumerate}[label=\textup{(\roman*)}]
		\item If \(G(p,q)\in\mathcal G_n^+\), then
		\begin{equation}\label{eq:green-power}
			G(p,q)^{\circ\alpha}
			=
			G(p^\alpha,q^\alpha)
			\in\mathcal G_n^+.
		\end{equation}
		
		\item If \(G(p,q)\in\mathcal G_n^{\mathrm{str}}\), then
		\begin{equation}\label{eq:strict-green-power}
			G(p,q)^{\circ\alpha}
			=
			G(p^\alpha,q^\alpha)
			\in\mathcal G_n^{\mathrm{str}}.
		\end{equation}
	\end{enumerate}
\end{proposition}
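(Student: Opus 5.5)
The plan is to verify the Hadamard-power identity entrywise and then apply the ratio criterion of Corollary~\ref{cor:green-positivity-criterion}, or equivalently the description \eqref{eq:PSD-Green-class} and \eqref{eq:strict-Green-ratios}, to the new parameters. Fix \(\alpha>0\) and positive sequences \(p,q\). The sequences \(p^\alpha=(p_1^\alpha,\ldots,p_n^\alpha)\) and \(q^\alpha\) are again positive, so \(G(p^\alpha,q^\alpha)\) is a positive-parameter Green matrix in the sense of Definition~\ref{def:green}.

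\textbf{Identity.} For every \(i,j\) we have
\[
\bigl(G(p,q)^{\circ\alpha}\bigr)_{ij}
=\bigl(p_{\min(i,j)}q_{\max(i,j)}\bigr)^\alpha
=p_{\min(i,j)}^\alpha q_{\max(i,j)}^\alpha,
\]
because \((xy)^\alpha=x^\alpha y^\alpha\) for \(x,y>0\). The right-hand side is exactly the \((i,j)\) entry of \(G(p^\alpha,q^\alpha)\), which proves the equalities in \eqref{eq:green-power} and \eqref{eq:strict-green-power}.

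\textbf{Membership.} The ratios of the new parameters are
\[
\frac{p_i^\alpha}{q_i^\alpha}=r_i^\alpha .
\]
Since \(t\mapsto t^\alpha\) is strictly increasing on \((0,\infty)\) when \(\alpha>0\), it preserves both the chain \(0<r_1\le\cdots\le r_n\) and the strict chain \(0<r_1<\cdots<r_n\). By \eqref{eq:PSD-Green-class}, the non-strict chain puts \(G(p^\alpha,q^\alpha)\) in \(\mathcal G_n^+\). By \eqref{eq:strict-Green-ratios}, the strict chain puts it in \(\mathcal G_n^{\mathrm{str}}\).

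There is no real obstacle here. The only point that needs care is that \(\alpha>0\) is used for strict monotonicity of \(t\mapsto t^\alpha\). When \(\alpha=0\) all ratios become equal to \(1\), so strictness fails; this is consistent with the convention \eqref{eq:zero-Hadamard-power}.
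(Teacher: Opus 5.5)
Your proof is correct and follows essentially the same route as the paper: you verify the identity \(G(p,q)^{\circ\alpha}=G(p^\alpha,q^\alpha)\) entrywise, then note that the new ratio sequence is \(r_i^\alpha\) and use that \(t\mapsto t^\alpha\) is strictly increasing for \(\alpha>0\), so both the weak and the strict ratio chains are preserved. Your remark about \(\alpha=0\) is a harmless aside that the proof does not need.
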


\begin{proof}
	The identity
	\[
	G(p,q)^{\circ\alpha}
	=
	G(p^\alpha,q^\alpha)
	\]
	follows entrywise from Definition~\ref{def:green}. The ratio sequence of
	the resulting Green matrix is
	\[
	\frac{p_i^\alpha}{q_i^\alpha}
	=
	\left(\frac{p_i}{q_i}\right)^\alpha,
	\qquad i=1,\ldots,n.
	\]
	Since \(t\mapsto t^\alpha\) is strictly increasing on \((0,\infty)\), it
	preserves both weak and strict inequalities among the ratios. Thus
	\[
	G(p,q)\in\mathcal G_n^+
	\quad\Longrightarrow\quad
	G(p,q)^{\circ\alpha}\in\mathcal G_n^+,
	\]
	and
	\[
	G(p,q)\in\mathcal G_n^{\mathrm{str}}
	\quad\Longrightarrow\quad
	G(p,q)^{\circ\alpha}\in\mathcal G_n^{\mathrm{str}}.
	\]
\end{proof}

\section{PSD preservers on the Green class}\label{sec:PSD-only}

Let
$
f:(0,\infty)\longrightarrow\mathbb R.
$
We ask that \(f[G]\) be positive semidefinite for every positive
semidefinite Green matrix \(G\) of every order, but impose no Green-structure
condition on \(f[G]\).

\begin{theorem}
	\label{thm:PSD-only-preservers}
	Let
	$
	f:(0,\infty)\longrightarrow\mathbb R.
	$
	The following are equivalent.
	\begin{enumerate}[label=\textup{(\roman*)}]
		\item for every \(n\ge1\) and \(G\in\mathcal G_n^+\), one has
		\(f[G]\succeq0\);
		
		\item either \(f\equiv0\), or there is a nonzero finite positive Borel
		measure \(\mu\) on \([0,\infty)\) such that
		\begin{equation}\label{eq:PSD-preserver-representation}
			f(t)
			=
			\int_{[0,\infty)}t^\alpha\,d\mu(\alpha),
			\qquad t>0,
		\end{equation}
		and the integral is finite for every \(t>0\).
	\end{enumerate}
\end{theorem}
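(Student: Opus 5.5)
The plan is to treat (ii)\(\Rightarrow\)(i) directly and to prove (i)\(\Rightarrow\)(ii) by reducing to Lemma~\ref{lem:Hankel-representation}.

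\emph{(ii)\(\Rightarrow\)(i).} For \(\alpha>0\), Proposition~\ref{prop:green-Hadamard-power} gives \(G^{\circ\alpha}\in\mathcal G_n^+\), hence \(G^{\circ\alpha}\succeq0\). For \(\alpha=0\), \(G^{\circ0}=\mathbf1\mathbf1^T\succeq0\). Since \(\int t^\alpha\,d\mu\) is finite for every \(t>0\), the matrix \(f[G]\) equals \(\int G^{\circ\alpha}\,d\mu(\alpha)\) entrywise. Then \(v^Tf[G]v=\int v^TG^{\circ\alpha}v\,d\mu\ge0\), and this is justified because the integrand is a finite sum of integrable functions.

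\emph{(i)\(\Rightarrow\)(ii).} First, Remark~\ref{rem:positive-rank-one-green} shows that every positive rank-one matrix \(xx^T\) lies in \(\mathcal G_n^+\). Next I would look for Green matrices whose entries form a Hankel pattern. Take \(p_i=\e^{x_i}\) and \(q_i=\e^{x_i}\), so that \(g_{ij}=\e^{x_i+x_j}\). This is only rank one, but it is enough to make the kernel \((x,y)\mapsto f(\e^{x+y})\) on \(X=\mathbb R\) positive semidefinite, since any finite set of points \(x_1,\dots,x_n\) gives a matrix of this form. So \(g(s)=f(\e^s)\) generates a PSD Hankel kernel on \(\mathbb R\).

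Lemma~\ref{lem:Hankel-representation} requires continuity, and proving it is the main obstacle. I would use genuinely non-rank-one Green matrices, such as \(2\times2\) and \(3\times3\) matrices \(\begin{pmatrix} a & b\\ b& c\end{pmatrix}\) with \(0<b\le\sqrt{ac}\) arbitrary. Indeed \(p_1q_1=a\), \(p_1q_2=b\), \(p_2q_2=c\) is solvable with \(r_1\le r_2\) exactly when \(b^2\le ac\), so every positive PSD \(2\times2\) matrix is Green. This yields the following.
\begin{itemize}
\item[(a)] \(f\ge0\), from \(1\times1\) matrices.
\item[(b)] \(f(b)^2\le f(a)f(c)\) whenever \(b^2\le ac\). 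Taking \(b<a=c\) gives monotonicity, and taking \(b^2=ac\) gives midpoint log-convexity of \(f(\e^s)\).
\item[(c)] If \(f\) vanishes at one point, then \(f\equiv0\). Monotonicity gives vanishing below that point, and the inequality \(f(b)^2\le f(a)f(c)\) with \(a\) small and \(c\) large propagates zero everywhere.
\end{itemize}
So \(f\equiv0\) or \(f>0\). In the positive case, \(\log f(\e^s)\) is nondecreasing and midpoint convex, hence locally bounded and therefore convex and continuous. (Alternatively, the \(3\times3\) Green matrices give the Horn–FitzGerald-type argument, but the two-by-two route suffices.)

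With continuity established, Lemma~\ref{lem:Hankel-representation} gives a positive measure \(\nu\) on \(\mathbb R\) with \(f(\e^s)=\int \e^{s\alpha}\,d\nu(\alpha)\), finite for all \(s\). Substituting \(t=\e^s\) gives \(f(t)=\int t^\alpha\,d\nu\). It remains to show that \(\nu\) is finite and supported on \([0,\infty)\). Monotonicity of \(f\) means \(f(\e^s)\) is bounded as \(s\to-\infty\). If \(\nu((-\infty,-\varepsilon])>0\), then \(\int_{(-\infty,-\varepsilon]}\e^{s\alpha}\,d\nu\ge \e^{-s\varepsilon}\nu(\cdot)\to\infty\) as \(s\to-\infty\), a contradiction. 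So \(\nu\) is supported on \([0,\infty)\). Monotone convergence then gives \(\nu([0,\infty))=\lim_{s\to-\infty}\int_{[0,\infty)} \e^{s\alpha}\,d\nu\), which only counts mass at \(0\); finiteness follows more directly from \(\nu([0,\infty))\le \int \max(1,\e^{\alpha})\,d\nu\le f(1)+f(\e)<\infty\). Nonzeroness follows from \(f>0\).
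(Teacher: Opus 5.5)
Your proof is correct and follows the paper's route. Order-two tests give the zero-or-positive dichotomy, monotonicity, and midpoint log-convexity of \(f(\e^s)\), hence continuity. The rank-one Green matrices then make \(f(\e^{s+t})\) a continuous PSD Hankel kernel for Lemma~\ref{lem:Hankel-representation}, and monotonicity excludes negative exponents. Your blow-up argument as \(s\to-\infty\) is a simpler substitute for the difference-quotient proof of Lemma~\ref{lem:monotone-Laplace-support}.

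The only blemish is the monotone-convergence sentence: that limit equals \(\nu(\{0\})\), not \(\nu([0,\infty))\). It is unnecessary anyway, since \(\nu(\mathbb R)=\int\e^{0\cdot\alpha}\,d\nu(\alpha)=f(1)\) gives finiteness directly.
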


The next proposition derives positivity, monotonicity, and continuity from
tests of orders one and two.

\begin{proposition}\label{prop:automatic-regularity}
	If \(f[G]\succeq0\) for every \(G\in\mathcal G_n^+\) and every \(n\ge1\),
	then either \(f\equiv0\), or \(f\)
	is positive, nondecreasing, and continuous on \((0,\infty)\).  In the latter
	case it also satisfies
	\begin{equation}\label{eq:multiplicative-log-convexity}
		f(\sqrt{uv})^2\le f(u)f(v),
		\qquad u,v>0.
	\end{equation}
\end{proposition}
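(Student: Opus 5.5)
Everything will come from tests of order one and two, using Green matrices from Definition~\ref{def:green-classes}. First, order one: every \(1\times1\) matrix \((t)\) with \(t>0\) lies in \(\mathcal G_1^+\) (take \(p_1=t,q_1=1\)), so \(f(t)\ge0\) for all \(t>0\). Next, order two: a \(2\times2\) Green matrix is
\[
G=\begin{pmatrix}p_1q_1&p_1q_2\\ p_1q_2&p_2q_2\end{pmatrix},
\]
and it lies in \(\mathcal G_2^+\) exactly when \(p_1/q_1\le p_2/q_2\). Write \(a=p_1q_1\), \(b=p_1q_2\), \(c=p_2q_2\). The condition becomes \(b^2\le ac\), with the parameters chosen freely otherwise. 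Equivalently, any symmetric positive \(2\times2\) matrix \(\begin{pmatrix}a&b\\b&c\end{pmatrix}\) with \(b^2\le ac\) is a Green matrix in \(\mathcal G_2^+\): take \(p_1=1\), \(q_1=a\), \(q_2=b\), \(p_2=c/b\). So the hypothesis at order two says that \(f\) preserves positive semidefiniteness on all positive \(2\times2\) PSD matrices. From \(f[G]\succeq0\) we get \(f(b)^2\le f(a)f(c)\) whenever \(b^2\le ac\).

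Second, I will extract the conclusions.
\begin{enumerate}[label=\textup{(\alph*)}]
\item Taking \(b=\sqrt{uv}\), \(a=u\), \(c=v\) gives \eqref{eq:multiplicative-log-convexity}.
\item For monotonicity, take \(0<s\le t\) with \(a=c=t\), \(b=s\). This gives \(f(s)^2\le f(t)^2\), hence \(f(s)\le f(t)\) since \(f\ge0\).
\item For the dichotomy, suppose \(f(t_0)=0\) for some \(t_0\). Monotonicity gives \(f=0\) on \((0,t_0]\). For \(u>t_0\), set \(a=t_0\), \(c=u^2/t_0\), \(b=u\). Then \(f(u)^2\le f(t_0)f(u^2/t_0)=0\), so \(f(u)=0\). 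Hence either \(f\equiv0\) or \(f>0\) everywhere.
\end{enumerate}

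Third, continuity in the positive case. Set \(h(x)=\log f(\e^x)\), which is real-valued on \(\mathbb R\). By (a), \(h\) is midpoint convex: \(h\bigl(\tfrac{x+y}{2}\bigr)\le\tfrac12(h(x)+h(y))\). By (b), \(h\) is nondecreasing, hence bounded above on every bounded interval. A midpoint convex function that is bounded above on an interval is convex and continuous (Bernstein--Doetsch). I expect this to be the only step needing care. Alternatively, I can argue directly with one-sided limits. A monotone \(h\) has one-sided limits at every \(x\), so suppose it had a jump at \(x\), say \(h(x^+)>h(x^-)\). Apply midpoint convexity at the points \(x-\delta\) and \(x+\delta\), whose midpoint is \(x\), and let \(\delta\to0\). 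This gives \(h(x)\le\tfrac12(h(x^-)+h(x^+))\), which does not by itself rule out the jump. So I would instead take \(y_1<x<y_2\) with \(y_2-x\) small relative to \(x-y_1\) and iterate midpoint convexity along dyadic points to contradict the jump. Citing the Bernstein--Doetsch theorem is the cleanest route. Then \(h\) is continuous, so \(f=\exp\circ h\circ\log\) is continuous.
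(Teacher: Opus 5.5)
Your proposal is correct and follows essentially the paper's route: order-one and order-two tests, a rank-one test for the inequality \(f(\sqrt{uv})^2\le f(u)f(v)\) and for the zero-or-positive dichotomy, and then midpoint convexity plus local boundedness of \(h(s)=\log f(\e^s)\) to get continuity. The only variation is in the monotonicity test. You first observe that every positive-entry PSD \(2\times2\) matrix lies in \(\mathcal G_2^+\), then use \(\begin{pmatrix}t&s\\s&t\end{pmatrix}\) with \(s\le t\), which needs only \(f\ge0\). The paper instead uses \(\begin{pmatrix}a&a\\a&b\end{pmatrix}\) after first establishing \(f>0\).
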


\begin{proof}
	Testing one-by-one matrices first gives \(f(t)\ge0\).  If \(f(t_0)=0\),
	then for arbitrary \(u>0\), preservation on the positive rank-one matrix
	\[
	\begin{pmatrix}
		t_0&\sqrt{t_0u}\\
		\sqrt{t_0u}&u
	\end{pmatrix}
	\]
	forces \(f(\sqrt{t_0u})=0\).  Since \(u\) is arbitrary, \(f\equiv0\).
	Thus every nonzero preserver is positive.
	
	For \(0<a\le b\), the matrix
	\begin{equation}\label{eq:monotonicity-test}
		\begin{pmatrix}a&a\\a&b\end{pmatrix}
	\end{equation}
	belongs to \(\mathcal G_2^+\).  The determinant of its image is
	\(f(a)(f(b)-f(a))\), so positivity of \(f(a)\) gives \(f(a)\le f(b)\).
	This proves monotonicity.  Similarly, applying \(f\) to the positive
	rank-one matrix
	\[
	\begin{pmatrix}u&\sqrt{uv}\\\sqrt{uv}&v\end{pmatrix}
	\]
	and taking its determinant proves
	\eqref{eq:multiplicative-log-convexity}.
	
	Finally, set \(h(s)=\log f(\e^s)\).  Inequality
	\eqref{eq:multiplicative-log-convexity} says that \(h\) is midpoint convex,
	and monotonicity makes \(h\) locally bounded.  Hence \(h\) is continuous,
	and so is \(f\).
\end{proof}

\subsection{Rank-one tests and Hankel kernels}

\begin{proposition}
	\label{prop:real-power-representation}
	Suppose \(f\not\equiv0\) and \(f[G]\succeq0\) for every
	\(G\in\mathcal G_n^+\) and every \(n\ge1\).  There is a nonzero
	finite positive Borel measure \(\mu\) on \(\mathbb R\) such that
	\begin{equation}\label{eq:real-power-representation}
		f(t)
		=
		\int_{\mathbb R}t^\alpha\,d\mu(\alpha),
		\qquad t>0.
	\end{equation}
	Moreover,
	\begin{equation}\label{eq:real-power-integrability}
		\int_{\mathbb R}t^\alpha\,d\mu(\alpha)<\infty,
		\qquad t>0.
	\end{equation}
\end{proposition}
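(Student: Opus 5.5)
By Remark~\ref{rem:positive-rank-one-green}, every positive rank-one matrix \(xx^T\) lies in \(\mathcal G_n^+\). The plan is to use only these rank-one tests, turn them into a Hankel kernel, and then invoke Lemma~\ref{lem:Hankel-representation}.

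Take \(X=\mathbb R\) and define
\[
K(x,y)=f(\e^{x+y}),\qquad g(s)=f(\e^s).
\]
Then \(K\) is a Hankel kernel with generating function \(g\). Fix \(x_1,\dots,x_n\in\mathbb R\) and put \(v_i=\e^{x_i}>0\). The matrix \(vv^T=(\e^{x_i+x_j})\) is a positive rank-one matrix, so it belongs to \(\mathcal G_n^+\). By hypothesis
\[
\bigl(K(x_i,x_j)\bigr)=f[vv^T]\succeq0 ,
\]
so \(K\) is a positive semidefinite kernel. By Proposition~\ref{prop:automatic-regularity}, since \(f\not\equiv0\), the function \(f\) is continuous on \((0,\infty)\). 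Hence \(g\) and \(K\) are continuous.

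Lemma~\ref{lem:Hankel-representation} now gives a positive Borel measure \(\mu\) on \(\mathbb R\) with
\[
f(\e^{s})=\int_{\mathbb R}\e^{s\alpha}\,d\mu(\alpha)\quad\text{for all } s=x+y\in\mathbb R,
\]
and the integral is finite for every \(s\). Substituting \(t=\e^s\) and \(\e^{s\alpha}=t^\alpha\) yields \eqref{eq:real-power-representation} together with the integrability \eqref{eq:real-power-integrability}.

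It remains to check three properties of \(\mu\).
\begin{itemize}
\item \emph{Finiteness.} Evaluating at \(t=1\) gives \(\mu(\mathbb R)=f(1)<\infty\), so \(\mu\) is finite.
\item \emph{Nonzero.} The same identity gives \(\mu(\mathbb R)=f(1)\). By Proposition~\ref{prop:automatic-regularity} every nonzero preserver is positive, so \(f(1)>0\) and \(\mu\ne0\).
\item \emph{Borel measurability.} This is supplied by the lemma.
\end{itemize}

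The only point needing care is matching the hypotheses of Lemma~\ref{lem:Hankel-representation}. The domain \(X=\mathbb R\) is open, and \(X+X=\mathbb R\) covers every \(s\), so the representation holds at every \(t>0\). Continuity of \(K\) comes from the automatic-regularity proposition rather than being assumed. Beyond this the argument is direct.

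Restricting the support of \(\mu\) to \([0,\infty)\) is not part of this statement. That step would presumably use the monotonicity of \(f\) from Proposition~\ref{prop:automatic-regularity} and non-rank-one Green tests later.
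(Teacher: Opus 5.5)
Your proposal is correct and follows the paper's proof essentially step for step. The rank-one tests \(vv^T\) with \(v_i=\e^{x_i}\) make \(K(x,y)=f(\e^{x+y})\) a continuous PSD Hankel kernel on \(\mathbb R\), with continuity supplied by Proposition~\ref{prop:automatic-regularity}. Lemma~\ref{lem:Hankel-representation} then gives the measure, and evaluating at \(t=1\) gives \(\mu(\mathbb R)=f(1)<\infty\). You also explicitly check that \(\mu\neq0\) because \(f(1)>0\), a point the paper's proof leaves implicit.
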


\begin{proof}
	Set \(g(s)=f(\e^s)\).  Proposition~\ref{prop:automatic-regularity}
	shows that \(g\) is continuous.  Given \(s_1,\ldots,s_n\in\mathbb R\), the
	positive rank-one matrix
	\[
	\bigl(\e^{s_i+s_j}\bigr)_{i,j=1}^n
	\]
	is Green.  Its entrywise image is positive semidefinite, so
	\[
	K(s,t)=g(s+t)
	\]
	is a continuous positive semidefinite Hankel kernel on
	\(\mathbb R\times\mathbb R\).  Lemma~\ref{lem:Hankel-representation}
	provides a positive Borel measure \(\mu\) on \(\mathbb R\) such that
	\begin{equation}\label{eq:Hankel-representation-applied}
		g(s+t)
		=
		\int_{\mathbb R}
		\e^{(s+t)\alpha}\,d\mu(\alpha).
	\end{equation}
	Setting \(s=t=u/2\) gives
	\begin{equation}\label{eq:g-bilateral-laplace}
		g(u)
		=
		\int_{\mathbb R}
		\e^{u\alpha}\,d\mu(\alpha),
		\qquad u\in\mathbb R.
	\end{equation}
	Since \(g(u)=f(\e^u)\), the substitution \(t=\e^u\) proves
	\eqref{eq:real-power-representation}.
	
	Taking \(u=0\) in \eqref{eq:g-bilateral-laplace} gives
	\[
	\mu(\mathbb R)=g(0)=f(1)<\infty.
	\]
	Thus \(\mu\) is finite and \eqref{eq:real-power-integrability} holds.
\end{proof}

\subsection{Support of the representing measure}

\begin{lemma}
	\label{lem:monotone-Laplace-support}
	Let \(\mu\) be a finite positive Borel measure on \(\mathbb R\), and assume
	\[
	g(s)=\int_{\mathbb R}\e^{s\alpha}\,d\mu(\alpha)
	\]
	is finite for all \(s\in\mathbb R\).  If \(g\) is nondecreasing on
	\(\mathbb R\), then
	\begin{equation}\label{eq:Laplace-support}
		\supp\mu\subseteq[0,\infty).
	\end{equation}
\end{lemma}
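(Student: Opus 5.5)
I would argue by contradiction: assume \(\mu\) gives positive mass to \((-\infty,0)\), and show that \(g(s)\) must then increase without bound as \(s\to-\infty\). That contradicts monotonicity, because \(g\) nondecreasing and positive forces \(g(s)\le g(0)=\mu(\mathbb R)<\infty\) for all \(s\le0\).

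The first step is to locate mass strictly away from zero. Since \((-\infty,0)=\bigcup_{k\ge1}(-\infty,-1/k]\), countable additivity gives some \(\delta>0\) with
\[
\mu\bigl((-\infty,-\delta]\bigr)=m>0 .
\]
The next step is a lower bound for \(g\) at negative arguments. For \(s<0\) and \(\alpha\le-\delta\) we have \(s\alpha\ge -s\delta=|s|\delta\). Because the integrand \(\e^{s\alpha}\) is positive, dropping the rest of \(\mathbb R\) gives
\[
g(s)\ \ge\ \int_{(-\infty,-\delta]}\e^{s\alpha}\,d\mu(\alpha)\ \ge\ m\,\e^{|s|\delta}.
\]
Letting \(s\to-\infty\), the right-hand side tends to \(+\infty\).

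On the other hand, monotonicity gives \(g(s)\le g(0)=\mu(\mathbb R)\) for every \(s\le 0\), and this is finite by hypothesis. These two bounds are incompatible. Hence \(\mu((-\infty,0))=0\). Since \([0,\infty)\) is closed, this yields \(\supp\mu\subseteq[0,\infty)\), which is \eqref{eq:Laplace-support}.

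There is no real obstacle here. The only points needing care are choosing a closed set \((-\infty,-\delta]\) that is uniformly separated from zero, so that the exponential bound is uniform in \(\alpha\), and noting that the finiteness hypothesis makes \(g(0)\) a genuine finite upper bound. Measurability and the rest are routine.
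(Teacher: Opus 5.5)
Your proof is correct. It reaches the contradiction by a more direct route than the paper, although both arguments rest on the same exponential blow-up of the mass on \((-\infty,-\delta]\).

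The paper works with increments. It writes \(g(-R+h)-g(-R)=\int_{\mathbb R}\e^{-R\alpha}(\e^{h\alpha}-1)\,d\mu(\alpha)\) and splits the integral according to the sign of the integrand. It bounds the contribution of \([0,\infty)\) by \(\mu([0,\infty))\) and shows that the contribution of \([-M,-\delta]\) tends to \(-\infty\). Hence \(g\) strictly decreases on \([-R,-R+h]\) for large \(R\).

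You instead compare each \(g(s)\), \(s<0\), with the single value \(g(0)=\mu(\mathbb R)\), using only the lower bound \(g(s)\ge m\,\e^{|s|\delta}\). This avoids the sign decomposition and the truncation to \([-M,-\delta]\).

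Your version also shows that monotonicity enters only through \(\sup_{s\le 0}g(s)<\infty\). In the application this is just boundedness of \(f\) on \((0,1]\), so you actually prove a slightly more general support criterion. The paper's increment argument gives the extra local information that monotonicity fails on arbitrarily short intervals far to the left, but the classification does not need this.
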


\begin{proof}
	Suppose, to the contrary, that
	\[
	\mu((-\infty,0))>0.
	\]
	Since
	\[
	(-\infty,0)
	=
	\bigcup_{m=1}^{\infty}(-\infty,-1/m],
	\]
	there is \(\delta>0\) such that
	\begin{equation}\label{eq:negative-support-mass}
		\mu((-\infty,-\delta])>0.
	\end{equation}
	By continuity from below, we may choose \(M>\delta\) such that
	\begin{equation}\label{eq:bounded-negative-mass}
		\mu([-M,-\delta])>0.
	\end{equation}
	
	Fix \(h>0\).  For \(R>h\), put \(s=-R\).  Then
	\begin{equation}\label{eq:Laplace-difference}
		g(-R+h)-g(-R)
		=
		\int_{\mathbb R}
		\e^{-R\alpha}
		\bigl(\e^{h\alpha}-1\bigr)
		\,d\mu(\alpha).
	\end{equation}
	
	On \([-M,-\delta]\),
	\[
	\e^{h\alpha}-1
	\le
	\e^{-h\delta}-1<0,
	\]
	and
	\[
	\e^{-R\alpha}\ge\e^{R\delta}.
	\]
	Thus the contribution of this interval to
	\eqref{eq:Laplace-difference} is at most
	\begin{equation}\label{eq:negative-contribution}
		-
		\bigl(1-\e^{-h\delta}\bigr)
		\e^{R\delta}\mu([-M,-\delta]),
	\end{equation}
	which tends to \(-\infty\) as \(R\to\infty\).
	
	On \([0,\infty)\),
	\[
	0
	\le
	\e^{-R\alpha}(\e^{h\alpha}-1)
	\le
	\e^{-(R-h)\alpha}
	\le1.
	\]
	so the total positive contribution from the nonnegative half-line is at most
	\[
	\mu([0,\infty))<\infty.
	\]
	On \((-\infty,0)\setminus[-M,-\delta]\),
	\[
	\e^{-R\alpha}(\e^{h\alpha}-1)\le0,
	\]
	so these portions cannot offset the negative contribution in
	\eqref{eq:negative-contribution}.
	
	Consequently, for all sufficiently large \(R\),
	\[
	g(-R+h)-g(-R)<0,
	\]
	contradicting monotonicity of \(g\).  Hence
	\[
	\mu((-\infty,0))=0,
	\]
	which proves \eqref{eq:Laplace-support}.
\end{proof}

\begin{proof}[Proof of Theorem~\ref{thm:PSD-only-preservers}]
	The zero function gives the first alternative.  For necessity, let
	\(f\not\equiv0\) satisfy condition~\textup{(i)}.
	By Proposition~\ref{prop:real-power-representation},
	\[
	f(t)
	=
	\int_{\mathbb R}t^\alpha\,d\mu(\alpha)
	\]
	for a finite positive measure \(\mu\).  By
	Proposition~\ref{prop:automatic-regularity}, \(f\) is nondecreasing, and hence
	\[
	g(s)=f(\e^s)
	\]
	is also nondecreasing.  Lemma~\ref{lem:monotone-Laplace-support} gives
	\[
	\supp\mu\subseteq[0,\infty).
	\]
	This is the representation in \eqref{eq:PSD-preserver-representation}.
	
	Conversely, suppose \eqref{eq:PSD-preserver-representation} holds for a
	positive measure \(\mu\) on \([0,\infty)\).
	Fix \(n\ge1\), \(G=(g_{ij})\in\mathcal G_n^+\), and
	\(x=(x_1,\ldots,x_n)^T\in\mathbb R^n\).  Interchanging the finite sum
	and the integral gives
	\begin{align}
		x^Tf[G]x
		&=
		\sum_{i,j=1}^n x_ix_jf(g_{ij})\notag\\
		&=
		\sum_{i,j=1}^n x_ix_j
		\int_{[0,\infty)}
		g_{ij}^{\alpha}\,d\mu(\alpha)\notag\\
		&=
		\int_{[0,\infty)}
		\sum_{i,j=1}^n
		x_ix_jg_{ij}^{\alpha}\,d\mu(\alpha)\notag\\
		&=
		\int_{[0,\infty)}
		x^TG^{\circ\alpha}x\,d\mu(\alpha).
		\label{eq:power-mixture-quadratic-form}
	\end{align}
	
	For \(\alpha>0\), Proposition~\ref{prop:green-Hadamard-power} gives
	\[
	G^{\circ\alpha}\succeq0.
	\]
	For \(\alpha=0\), \eqref{eq:zero-Hadamard-power} gives
	\[
	G^{\circ0}
	=
	\mathbf1\mathbf1^T\succeq0.
	\]
	Thus the integrand in \eqref{eq:power-mixture-quadratic-form} is
	nonnegative, so
	\[
	x^Tf[G]x\ge0.
	\]
	Since \(x\) was arbitrary,
	\[
	f[G]\succeq0.
	\]
\end{proof}

\begin{remark}
	When \(f\not\equiv0\) in Theorem~\ref{thm:PSD-only-preservers}, the
	representing measure \(\mu\) is unique.  Indeed, set
	\[
	g(s)=f(\e^s).
	\]
	Then \(\mu\) represents the bilateral Laplace transform
	\[
	g(s)=\int_{[0,\infty)}\e^{s\alpha}\,d\mu(\alpha)
	\]
	Uniqueness for bilateral Laplace transforms of positive measures on their
	domain gives the assertion.
\end{remark}

\subsection{A reduced test family}

The proof of the main theorem uses only two families of Green matrices:

\begin{enumerate}
	\item all positive rank-one matrices
	\[
	xx^T;
	\]
	\item all matrices
	\[
	\begin{pmatrix}
		a&a\\
		a&b
	\end{pmatrix},
	\qquad 0<a\le b.
	\]
\end{enumerate}

\begin{corollary}
\label{cor:reduced-test-family}
	Let \(f:(0,\infty)\to\mathbb R\).  The following are equivalent.
	\begin{enumerate}[label=\textup{(\roman*)}]
		\item \(f[G]\succeq0\) for every \(n\ge1\) and
		\(G\in\mathcal G_n^+\);
		
		\item for every \(n\ge1\) and \(x\in(0,\infty)^n\),
		\[
		\bigl(f(x_ix_j)\bigr)_{i,j=1}^n\succeq0,
		\]
		and, for every \(0<a\le b\),
		\[
		\begin{pmatrix}
			f(a)&f(a)\\
			f(a)&f(b)
		\end{pmatrix}
		\succeq0.
		\]
		
		\item either \(f\equiv0\), or
		\[
		f(t)
		=
		\int_{[0,\infty)}
		t^\alpha\,d\mu(\alpha)
		\]
		where \(\mu\) is a nonzero finite positive Borel measure on
		\([0,\infty)\).
	\end{enumerate}
\end{corollary}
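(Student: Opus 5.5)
We prove the cycle \textup{(i)}\(\Rightarrow\)\textup{(ii)}\(\Rightarrow\)\textup{(iii)}\(\Rightarrow\)\textup{(i)}.

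\textbf{\textup{(i)}\(\Rightarrow\)\textup{(ii)}.} This holds because both test families lie in the Green cone. Every positive rank-one matrix \(xx^T\) belongs to \(\mathcal G_n^+\) by Remark~\ref{rem:positive-rank-one-green}, and its entrywise image is \((f(x_ix_j))\). The matrices \(\begin{pmatrix}a&a\\a&b\end{pmatrix}\) with \(0<a\le b\) are \(G(p,q)\) with \(p=(a,b)\) and \(q=(1,1)\). Their ratios \(a\le b\) are nondecreasing, so these matrices lie in \(\mathcal G_2^+\) by \eqref{eq:PSD-Green-class}.

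\textbf{\textup{(iii)}\(\Rightarrow\)\textup{(i)}.} This is the sufficiency half of the proof of Theorem~\ref{thm:PSD-only-preservers}. It rests on the quadratic-form computation \eqref{eq:power-mixture-quadratic-form} together with Proposition~\ref{prop:green-Hadamard-power}. The integral in \textup{(iii)} is implicitly required to be finite, so \(f\) is real-valued.

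\textbf{\textup{(ii)}\(\Rightarrow\)\textup{(iii)}.} This is the substantive direction. The plan is to check that each earlier step used only the two test families.

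First, the proof of Proposition~\ref{prop:automatic-regularity} can be rerun under \textup{(ii)}:
\begin{itemize}[label=\(\cdot\)]
\item Nonnegativity comes from \(1\times1\) rank-one tests with \(x=(\sqrt t)\).
\item The dichotomy ``\(f\equiv0\) or \(f>0\)'' uses the rank-one matrix with \(x=(\sqrt{t_0},\sqrt u)\).
\item Monotonicity uses exactly the second family.
\item The inequality \eqref{eq:multiplicative-log-convexity} comes from the rank-one matrix with \(x=(\sqrt u,\sqrt v)\).
\end{itemize}
Continuity of \(f\) then follows as before: \(h(s)=\log f(\e^s)\) is midpoint convex and locally bounded, hence continuous.

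Second, the proof of Proposition~\ref{prop:real-power-representation} uses only the matrices \((\e^{s_i+s_j})\). These are the rank-one tests with \(x_i=\e^{s_i}\), and they show that \(g(s+t)=f(\e^{s+t})\) is a continuous PSD Hankel kernel on \(\mathbb R\times\mathbb R\). Lemma~\ref{lem:Hankel-representation} then yields a finite positive measure \(\mu\) on \(\mathbb R\) with \(f(t)=\int t^\alpha\,d\mu\), finite for every \(t>0\).

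Finally, monotonicity (from the second family) makes \(g\) nondecreasing. Lemma~\ref{lem:monotone-Laplace-support} then places \(\operatorname{supp}\mu\) in \([0,\infty)\). The measure is nonzero because \(\mu([0,\infty))=f(1)>0\).

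The main point needing care is the continuity step. Lemma~\ref{lem:Hankel-representation} requires continuity, and that continuity is derived by combining monotonicity from the second family with log-midpoint convexity from the rank-one family. So neither family alone suffices for this argument, and we must confirm that no other Green matrices enter.
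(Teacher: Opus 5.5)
Your proposal is correct and follows the paper's proof essentially step for step: (i)\(\Rightarrow\)(ii) holds because both test families lie in the Green cone. (ii)\(\Rightarrow\)(iii) reruns Propositions~\ref{prop:automatic-regularity} and \ref{prop:real-power-representation} and Lemma~\ref{lem:monotone-Laplace-support} using only the two families, and (iii)\(\Rightarrow\)(i) is the quadratic-form computation. Your explicit check that continuity, which Lemma~\ref{lem:Hankel-representation} needs, comes only from rank-one log-midpoint convexity plus second-family monotonicity is a point the paper leaves implicit, and you handle it correctly.
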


\begin{proof}
	That \textup{(i)} implies \textup{(ii)} follows immediately for the
	positive rank-one matrices, while
	\[
	\begin{pmatrix}
		a&a\\
		a&b
	\end{pmatrix}
	\]
	belongs to \(\mathcal G_2^+\).
	
	Under \textup{(ii)}, the rank-one tests show directly that
	\(K(s,t)=f(\e^{s+t})\) is a PSD Hankel kernel, while the tests of order two give
	\[
	f(a)\bigl(f(b)-f(a)\bigr)\ge0.
	\]
	The zero-or-positive argument in Proposition~\ref{prop:automatic-regularity} shows
	that either \(f\equiv0\), or \(f>0\) and \(f\) is nondecreasing.  The
	proofs of Proposition~\ref{prop:real-power-representation} and
	Lemma~\ref{lem:monotone-Laplace-support} now apply and give
	\textup{(iii)}.
	
	Finally, \textup{(iii)}\(\Rightarrow\)\textup{(i)} follows from the
	quadratic-form calculation in the proof of
	Theorem~\ref{thm:PSD-only-preservers}.
\end{proof}
\section{Loewner monotonicity with \texorpdfstring{\(q\)}{q} fixed}
\label{sec:Loewner-fixed-q}

For a fixed vector \(q=(q_1,\ldots,q_n)\in(0,\infty)^n\), set
\begin{equation}\label{eq:fixed-q-family}
	\mathcal G_n^+(q)
	=
	\left\{
	D_qA_{\min}(r)D_q:
	0<r_1\le\cdots\le r_n
	\right\}.
\end{equation}
This family is convex. Indeed, if
\[
A=D_qA_{\min}(r)D_q,
\qquad
B=D_qA_{\min}(s)D_q
\]
belong to \(\mathcal G_n^+(q)\), then, for \(0\le\lambda\le1\),
\[
(1-\lambda)A+\lambda B
=
D_qA_{\min}\bigl((1-\lambda)r+\lambda s\bigr)D_q
\in\mathcal G_n^+(q).
\]
Thus the line segment joining \(A\) and \(B\) remains in the same
fixed-\(q\) family.

Already \(\mathcal G_3^+\) is not convex. To see this, let
\[
x=(1,1,1)^T,
\qquad
y=(1,2,3)^T.
\]
By Remark~\ref{rem:positive-rank-one-green}, both \(xx^T\) and \(yy^T\) belong to
\(\mathcal G_3^+\), whereas
\[
C=xx^T+yy^T
=
\begin{pmatrix}
	2&3&4\\
	3&5&7\\
	4&7&10
\end{pmatrix}
\]
is not a Green matrix. Indeed, every \(3\times3\) Green matrix
\(G=(g_{ij})\) satisfies
\[
g_{12}g_{23}
=
(p_1q_2)(p_2q_3)
=
(p_1q_3)(p_2q_2)
=
g_{13}g_{22}.
\]
For the matrix \(C\), however,
\[
c_{12}c_{23}=3\cdot7=21
\qquad\text{and}\qquad
c_{13}c_{22}=4\cdot5=20.
\]
Hence \(C\) is not Green. Since multiplication by a positive scalar
preserves Green structure, the midpoint \(C/2\) is not Green either. Hence
\(\mathcal G_3^+\) is not convex.

We now determine the continuously differentiable functions \(f\) for which
\[
A\succeq B
\quad\Longrightarrow\quad
f[A]\succeq f[B]
\]
whenever \(A\) and \(B\) belong to the same fixed-\(q\) family.

\begin{theorem}
	\label{thm:Loewner-green-fibers}
	Let \(f\in C^1(0,\infty)\).  The following are equivalent.
	\begin{enumerate}[label=\textup{(\roman*)}]
		\item for every \(n\ge1\), every \(q\in(0,\infty)^n\), and all
		\(A,B\in\mathcal G_n^+(q)\),
		\[
			A\succeq B
			\quad\Longrightarrow\quad
			f[A]\succeq f[B];
		\]

		\item there are \(c\in\mathbb R\) and a finite positive Borel measure
		\(\mu\) on \([0,\infty)\), possibly the zero measure, such that
		\begin{equation}\label{eq:Loewner-fiber-representation}
			f(t)
			=
			c+
			\int_{[0,\infty)}
			\frac{t^{\alpha+1}-1}{\alpha+1}\,d\mu(\alpha),
			\qquad t>0,
		\end{equation}
		and the integral is finite for every \(t>0\).
	\end{enumerate}
\end{theorem}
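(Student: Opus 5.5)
The plan is to reduce Loewner monotonicity on a fixed-\(q\) family to positive semidefiniteness of \(f'[G]\), and then apply Theorem~\ref{thm:PSD-only-preservers} to \(f'\).

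\emph{Necessity.} Fix \(n\), \(q\), and \(G=D_qA_{\min}(r)D_q\in\mathcal G_n^+(q)\), where \(0<r_1\le\cdots\le r_n\). For \(\varepsilon>0\), set \(r^\varepsilon=r+\varepsilon\mathbf1\). Its entries are still positive and nondecreasing, so
\[
A_\varepsilon=D_qA_{\min}(r^\varepsilon)D_q\in\mathcal G_n^+(q).
\]
Since \(A_{\min}(\mathbf1)=\mathbf1\mathbf1^T\), we have \(A_\varepsilon-G=\varepsilon qq^T\succeq0\). Hypothesis (i) then gives \(f[A_\varepsilon]-f[G]\succeq0\). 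Dividing by \(\varepsilon\) and letting \(\varepsilon\downarrow0\) entrywise, which is allowed because \(f\) is differentiable, yields
\[
f'[G]\circ qq^T=D_qf'[G]D_q\succeq0 .
\]
Hence \(f'[G]\succeq0\), because \(D_q\) is invertible. Since \(G\in\mathcal G_n^+\) was arbitrary (every such matrix lies in the family of its own \(q\)), \(f'\) satisfies condition (i) of Theorem~\ref{thm:PSD-only-preservers}. Therefore either \(f'\equiv0\), or
\[
f'(t)=\int_{[0,\infty)}t^\alpha\,d\mu(\alpha)
\]
for a finite positive measure \(\mu\), with the integral finite for each \(t>0\). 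In both cases \(\mu\) is allowed to be the zero measure.

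Next, write \(f(t)=f(1)+\int_1^t f'(u)\,du\), which holds because \(f\in C^1\). Apply Tonelli's theorem to the nonnegative integrand \(u^\alpha\) on \([\min(1,t),\max(1,t)]\times[0,\infty)\). This gives
\[
\int_1^t u^\alpha\,du=\frac{t^{\alpha+1}-1}{\alpha+1},
\]
so the formula \eqref{eq:Loewner-fiber-representation} holds with \(c=f(1)\). The integral in \eqref{eq:Loewner-fiber-representation} is finite, since its absolute value equals \(\bigl|\int_1^t f'(u)\,du\bigr|<\infty\).

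\emph{Sufficiency.} Assume (ii). The step I expect to need the most care is recovering \(f'(t)=\int t^\alpha\,d\mu(\alpha)\) with this integral finite. I would show that \(\int_1^t\!\int u^\alpha\,d\mu\,du\) is finite for all \(t\) by Tonelli, using the finiteness of \eqref{eq:Loewner-fiber-representation}. This makes \(F(u)=\int u^\alpha\,d\mu\) locally integrable and nondecreasing, and \(f(t)=c+\int_1^tF\). Since \(f\in C^1\) and \(F\) is monotone, Lebesgue differentiation gives \(f'=F\) almost everywhere, and monotonicity together with continuity of \(f'\) upgrades this to \(f'=F\) everywhere.

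By Theorem~\ref{thm:PSD-only-preservers}, \(f'[C]\succeq0\) for every \(C\in\mathcal G_n^+\); if \(\mu=0\) this is trivial. Now take \(A,B\in\mathcal G_n^+(q)\) with \(A\succeq B\). The segment \(C_\lambda=B+\lambda(A-B)\), \(\lambda\in[0,1]\), stays in \(\mathcal G_n^+(q)\) by the convexity shown above. Entrywise, by the fundamental theorem of calculus,
\[
f[A]-f[B]=\int_0^1 f'[C_\lambda]\circ(A-B)\,d\lambda .
\]
Each integrand is a Schur product of two positive semidefinite matrices, so it is positive semidefinite by the Schur product theorem. Integrating preserves positive semidefiniteness, because the integral is continuous in \(\lambda\) and the quadratic forms are nonnegative. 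Hence \(f[A]\succeq f[B]\).
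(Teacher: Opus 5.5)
Your proposal is correct and follows the paper's proof essentially step for step. You perturb by $G+\varepsilon qq^T$ to obtain $D_qf'[G]D_q\succeq0$, apply Theorem~\ref{thm:PSD-only-preservers} to $f'$ and integrate, and prove the converse via convexity of $\mathcal G_n^+(q)$, the fundamental theorem of calculus and the Schur product theorem. The only minor difference is in the converse, where you identify $f'=\int t^\alpha\,d\mu(\alpha)$ via Tonelli, monotonicity of $F$ and continuity of $f'$, whereas the paper differentiates under the integral sign using a domination bound; both arguments are valid.
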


\begin{proof}
	Assume \textup{(i)}.  Let \(G=D_qA_{\min}(r)D_q\in\mathcal G_n^+(q)\).
	For \(\varepsilon>0\),
	\[
		G+\varepsilon qq^T
		=
		D_qA_{\min}(r+\varepsilon\mathbf1)D_q
		\in\mathcal G_n^+(q).
	\]
	It follows from \textup{(i)} that
	\[
		\frac{f[G+\varepsilon qq^T]-f[G]}{\varepsilon}
		\succeq0.
	\]
	Letting \(\varepsilon\downarrow0\) gives
	\[
		D_q f'[G]D_q\succeq0,
	\]
	and hence \(f'[G]\succeq0\).  Since \(n,q\), and \(G\) were arbitrary,
	Theorem~\ref{thm:PSD-only-preservers} applied to \(f'\) shows that either
	\(f'\equiv0\), or
	\[
		f'(t)=\int_{[0,\infty)}t^\alpha\,d\mu(\alpha)
	\]
	for a nonzero finite positive measure \(\mu\), with the integral finite for
	every \(t>0\).  Integrating from \(1\) to \(t\) proves
	\eqref{eq:Loewner-fiber-representation}; the case \(f'\equiv0\) corresponds
	to \(\mu=0\).

	Conversely, suppose \textup{(ii)} holds.  The finiteness assumption permits
	differentiation under the integral sign on compact subintervals of
	\((0,\infty)\): for a given compact interval, the integrand after
	differentiation is dominated by the integrand in
	\eqref{eq:Loewner-fiber-representation} evaluated at a larger argument,
	up to a constant.  Hence
	\[
		f'(t)=\int_{[0,\infty)}t^\alpha\,d\mu(\alpha).
	\]
	Thus Theorem~\ref{thm:PSD-only-preservers} gives \(f'[C]\succeq0\) for
	every \(C\in\mathcal G_n^+\); this is immediate as well when \(\mu=0\).
	Now let \(A,B\in\mathcal G_n^+(q)\) with \(A\succeq B\), and put
	\(H=A-B\).  By the convexity of \(\mathcal G_n^+(q)\) established above,
	\[
		B+sH\in\mathcal G_n^+(q),
		\qquad 0\le s\le1.
	\]
	Therefore, by the Schur product theorem,
	\begin{align*}
		f[A]-f[B]
		&=
		\int_0^1 f'[B+sH]\circ H\,ds
		\succeq0.
	\end{align*}
	This proves \textup{(i)}.
\end{proof}

\section{Rigidity of totally nonnegative output}\label{sec:TN-output}

Although positive semidefiniteness and total nonnegativity coincide for Green
inputs, requiring \(f[G]\) to be totally nonnegative is stronger than
requiring it to be positive semidefinite.

\begin{theorem}
	\label{thm:TN-output-preservers}
	Let \(f:(0,\infty)\to\mathbb R\).  The following are equivalent.
	\begin{enumerate}[label=\textup{(\roman*)}]
		\item for every \(n\ge1\) and \(G\in\mathcal G_n^+\), the matrix
		\[
		f[G]
		\]
		is totally nonnegative;

		\item \(f[G]\succeq0\) for every \(n\ge1\) and
		\(G\in\mathcal G_n^+\), and
		\(f[G]\) is TN for every \(G\in\mathcal G_3^{\mathrm{str}}\);

		\item either \(f\equiv0\), or there are \(c>0\) and \(\alpha\ge0\)
		such that
		\begin{equation}\label{eq:TN-output-power-form}
			f(t)=ct^\alpha,
			\qquad t>0.
		\end{equation}
	\end{enumerate}
\end{theorem}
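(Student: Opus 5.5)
The implication (i)\(\Rightarrow\)(ii) is immediate, since a TN symmetric matrix has nonnegative principal minors and is therefore positive semidefinite, and \(\mathcal G_3^{\mathrm{str}}\subseteq\mathcal G_3^+\). For (iii)\(\Rightarrow\)(i), the case \(f\equiv0\) is trivial. If \(\alpha=0\), then \(f[G]=c\mathbf1\mathbf1^T\), which is TN. If \(\alpha>0\), Proposition~\ref{prop:green-Hadamard-power} gives \(f[G]=c\,G(p^\alpha,q^\alpha)\in\mathcal G_n^+\), and Remark~\ref{rem:green-TN-criterion} (Pinkus) shows that this matrix is TN. So the whole content lies in (ii)\(\Rightarrow\)(iii).

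For (ii)\(\Rightarrow\)(iii), first use the PSD hypothesis through Proposition~\ref{prop:automatic-regularity}. Either \(f\equiv0\), or \(f\) is positive, nondecreasing and continuous. Assume the latter, and extract a functional equation from a non-principal \(2\times2\) minor of a \(3\times3\) strict Green matrix. Take rows \(\{1,2\}\) and columns \(\{2,3\}\) of \(f[G]\), with \(G=G(p,q)\in\mathcal G_3^{\mathrm{str}}\). TN of \(f[G]\) gives
\[
f(p_1q_2)\,f(p_2q_3)-f(p_1q_3)\,f(p_2q_2)\ge0 .
\]
The key observation is that strictness only constrains \(p_1/q_1<p_2/q_2<p_3/q_3\). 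Since \(q_1\) and \(p_3\) appear in none of these four entries, the quantities \(p_1,p_2,q_2,q_3>0\) are completely free: choose \(q_1\) large and \(p_3\) large afterwards. Given any \(a,b,c,d>0\) with \(ab=cd\), set \(p_1=1\), \(q_2=a\), \(q_3=c\), \(p_2=d/a\). Then \(p_1q_2=a\), \(p_1q_3=c\), \(p_2q_2=d\), and \(p_2q_3=cd/a=b\). Hence \(f(a)f(b)\ge f(c)f(d)\) whenever \(ab=cd\). Exchanging the roles of the pairs \((a,b)\) and \((c,d)\) gives the reverse inequality, and therefore equality.

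Now put \(h(s)=\log f(\e^s)\), which is continuous. The identity reads \(h(x)+h(y)=h(z)+h(w)\) whenever \(x+y=z+w\). Taking \(z=w=(x+y)/2\) yields Jensen's equation \(h\bigl(\tfrac{x+y}2\bigr)=\tfrac12\bigl(h(x)+h(y)\bigr)\). Continuity then forces \(h(s)=\log c+\alpha s\), that is, \(f(t)=ct^\alpha\) with \(c=f(1)>0\). Monotonicity of \(f\) from Proposition~\ref{prop:automatic-regularity} gives \(\alpha\ge0\). Alternatively, this follows from the support statement in Theorem~\ref{thm:PSD-only-preservers}.

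The only step needing care is the parametrization showing that the four entries involved in the minor range over all quadruples with \(ab=cd\) while \(G\) stays strict. Once the free parameters \(q_1\) and \(p_3\) are identified, this is routine, and the remaining argument is the standard continuous solution of Jensen's equation.
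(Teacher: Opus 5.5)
Your proof is correct. Your treatment of (i)$\Rightarrow$(ii) and (iii)$\Rightarrow$(i) matches the paper's, but for (ii)$\Rightarrow$(iii) you take a genuinely different route.

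\textbf{The paper's route.} It first invokes Theorem~\ref{thm:PSD-only-preservers}, and hence the Hankel-kernel representation, to write $f(t)=c\int_{[0,\infty)} t^\alpha\,d\nu(\alpha)$ with $\nu$ a probability measure. It then uses the single explicit family $p=(a,1,2a)$, $q=(2a,1,a)$ and the same minor you use (rows $1,2$, columns $2,3$). This gives only the one-sided bound $f(a)^2\ge f(a^2)f(1)$. The reverse bound comes from Cauchy--Schwarz for $\nu$, and the equality case forces $\nu=\delta_{\alpha_0}$.

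\textbf{Your route.} You observe that $q_1$ and $p_3$ do not enter that minor, so its four entries $(g_{12},g_{23},g_{13},g_{22})$ range over every positive quadruple $(a,b,c,d)$ with $ab=cd$, while strictness is kept by enlarging $q_1$ and $p_3$. TN alone then gives both inequalities, hence the multiplicative equation $f(a)f(b)=f(ab)f(1)$. You solve it using only the positivity, monotonicity and continuity from Proposition~\ref{prop:automatic-regularity}.

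\textbf{Comparison.} Your argument is more elementary. It bypasses the Hankel-kernel theorem and the measure-theoretic rigidity step entirely. It also uses the PSD part of (ii) only through tests of orders one and two, so it shows that the all-orders PSD hypothesis there is stronger than necessary. The paper's argument, in exchange, reuses the representation already established and needs only a one-sided inequality from one explicit test family. It also shows the rigidity structurally: TN output collapses the representing measure of a Green PSD preserver to a point mass.
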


\begin{proof}
	Assume \textup{(i)}.  Since \(f[G]\) is symmetric and TN, all its
	principal minors are nonnegative, so \(f[G]\succeq0\).  Thus \(f\) is a
	PSD preserver on the Green class, and \textup{(ii)} follows.

	Now assume \textup{(ii)}.  The assertion is immediate if \(f\equiv0\),
	so suppose \(f\not\equiv0\).  By
	Theorem~\ref{thm:PSD-only-preservers}, there is a nonzero finite positive
	Borel measure \(\mu\) on \([0,\infty)\) such that
	\[
	f(t)=\int_{[0,\infty)}t^\alpha\,d\mu(\alpha).
	\]
	Set
	\[
	c=f(1)=\mu([0,\infty))>0,
	\qquad
	\nu=\frac{\mu}{c}.
	\]
Then \(\nu\) is a probability measure, since
\(\nu([0,\infty))=1\). Define
	\begin{equation}\label{eq:normalized-Laplace-TN}
		M(s)=\frac{f(\e^s)}{f(1)}
		=\int_{[0,\infty)}\e^{s\alpha}\,d\nu(\alpha),
		\qquad s\in\mathbb R.
	\end{equation}

	Fix \(s>0\), put \(a=\e^s\), and take
	\[
	p=(a,1,2a),
	\qquad
	q=(2a,1,a).
	\]
	The corresponding ratio sequence is
	\[
	\frac12<1<2,
	\]
	so \(G=G(p,q)\in\mathcal G_3^{\mathrm{str}}\).  Moreover,
	\[
	g_{12}=g_{23}=a,
	\qquad
	g_{22}=1,
	\qquad
	g_{13}=a^2.
	\]
	The minor of \(f[G]\) in rows \(1,2\) and columns \(2,3\) is
	nonnegative, giving
	\[
	f(a)^2-f(a^2)f(1)\ge0.
	\]
	Division by \(f(1)^2\) yields
	\begin{equation}\label{eq:TN-Laplace-upper}
		M(s)^2\ge M(2s).
	\end{equation}
	On the other hand, Cauchy--Schwarz for the probability measure \(\nu\)
	gives
	\begin{equation}\label{eq:TN-Laplace-lower}
		M(s)^2
		=
		\left(\int\e^{s\alpha}\,d\nu(\alpha)\right)^2
		\le
		\int\e^{2s\alpha}\,d\nu(\alpha)
		=M(2s).
	\end{equation}
	Hence equality holds in both \eqref{eq:TN-Laplace-upper} and
	\eqref{eq:TN-Laplace-lower}.  Equality in Cauchy--Schwarz says that
	\(\e^{s\alpha}\) is constant \(\nu\)-almost everywhere.  Since \(s>0\) and \(\alpha\mapsto\e^{s\alpha}\) is strictly increasing,
	there is \(\alpha_0\ge0\) such that
	\[
	\nu=\delta_{\alpha_0},
	\]
	where \(\delta_{\alpha_0}\) denotes the Dirac measure at
	\(\alpha_0\), that is, the probability measure concentrated at the
	single point \(\alpha_0\).
	Thus \(\mu=c\delta_{\alpha_0}\), and hence
	\[
	f(t)=ct^{\alpha_0}.
	\]
	This proves \textup{(ii)}\(\Rightarrow\)\textup{(iii)}.

	Finally, the zero function clearly gives TN output.  If
	\(f(t)=ct^\alpha\) with \(c>0\) and \(\alpha>0\), then
	Proposition~\ref{prop:green-Hadamard-power} shows that
	\(f[G]=cG^{\circ\alpha}\) is a positive semidefinite Green matrix, hence
	TN by Remark~\ref{rem:green-TN-criterion}.  If \(\alpha=0\), then
	\[
	f[G]=c\mathbf1\mathbf1^T
	\]
	is also TN.  Thus \textup{(iii)}\(\Rightarrow\)\textup{(i)}.
\end{proof}

\begin{remark}
	\label{rem:PSD-not-TN}
	The function \(f(t)=1+t\) preserves PSD on the Green class and is represented by
	\(\delta_0+\delta_1\).  However, take
	\[
	G
	=G\bigl((2,1,3),(3,1,2)\bigr)
	=
	\begin{pmatrix}
		6&2&4\\
		2&1&2\\
		4&2&6
	\end{pmatrix}.
	\]
	Its ratio sequence is \(2/3<1<3/2\), so \(G\) is strict Green.  Yet
	\[
	f[G]
	=
	\begin{pmatrix}
		7&3&5\\
		3&2&3\\
		5&3&7
	\end{pmatrix},
	\]
	the minor in rows \(1,2\) and columns \(2,3\) is
	\[
	\det\begin{pmatrix}3&5\\2&3\end{pmatrix}=-1.
	\]
	Thus \(f[G]\) is not TN.  This example shows explicitly that equivalence
	of PSD and TN on the input Green class does not make the two output
	preserver problems equivalent.
\end{remark}

\section{Strict Green input and positive definite output}\label{sec:strict-PD}

We retain the PSD-output condition but now require the image of every strict
Green input to be positive definite.  The image is still not required to
have Green structure.

\begin{theorem}
	\label{thm:strict-to-PD}
	Let \(f:(0,\infty)\to\mathbb R\) preserve PSD on the Green class.  The
	following are equivalent.
	\begin{enumerate}[label=\textup{(\roman*)}]
		\item for every \(n\ge2\) and \(G\in\mathcal G_n^{\mathrm{str}}\),
		\[
		f[G]\succ0;
		\]
		
		\item \(f\) admits the representation
		\begin{equation}\label{eq:strict-PD-representation}
			f(t)
			=
			\int_{[0,\infty)}
			t^\alpha\,d\mu(\alpha),
			\qquad t>0,
		\end{equation}
		where \(\mu\) is a nonzero finite positive Borel measure on
		\([0,\infty)\) satisfying
		\begin{equation}\label{eq:strict-PD-positive-mass}
			\mu((0,\infty))>0.
		\end{equation}
	\end{enumerate}
\end{theorem}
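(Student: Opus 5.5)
Since \(f\) preserves PSD on the Green class, Theorem~\ref{thm:PSD-only-preservers} already gives either \(f\equiv0\) or the representation \eqref{eq:strict-PD-representation} with a nonzero finite positive measure \(\mu\) on \([0,\infty)\). The proof therefore only has to show that \textup{(i)} excludes \(f\equiv0\) and the case \(\mu=c\delta_0\), and that \eqref{eq:strict-PD-positive-mass} yields strict positivity. I would base everything on the quadratic-form identity \eqref{eq:power-mixture-quadratic-form},
\[
x^Tf[G]x=\int_{[0,\infty)}x^TG^{\circ\alpha}x\,d\mu(\alpha).
\]

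For \textup{(i)}\(\Rightarrow\)\textup{(ii)}, take any \(G\in\mathcal G_2^{\mathrm{str}}\), for instance \(G=\begin{pmatrix}1&1\\1&2\end{pmatrix}\), whose ratios are \(1<2\). If \(f\equiv0\), then \(f[G]=0\) is not positive definite. If \(\mu((0,\infty))=0\), then \(\mu=c\delta_0\), so \(f\) is the constant \(c\) and \(f[G]=c\mathbf1\mathbf1^T\) is singular for \(n\ge2\). Either way \textup{(i)} fails, so \textup{(i)} forces \eqref{eq:strict-PD-positive-mass}.

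For \textup{(ii)}\(\Rightarrow\)\textup{(i)}, fix \(G\in\mathcal G_n^{\mathrm{str}}\) and \(x\neq0\). For every \(\alpha>0\), Proposition~\ref{prop:green-Hadamard-power}\textup{(ii)} gives \(G^{\circ\alpha}\in\mathcal G_n^{\mathrm{str}}\), and this matrix is positive definite by \eqref{eq:green-PD-equivalence}. Hence \(x^TG^{\circ\alpha}x>0\) for every \(\alpha>0\). The \(\alpha=0\) term \(x^T\mathbf1\mathbf1^Tx=(\mathbf1^Tx)^2\) is nonnegative. The integrand is therefore nonnegative everywhere and strictly positive on \((0,\infty)\), a set of positive \(\mu\)-measure. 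It is also measurable in \(\alpha\), being a finite linear combination of the functions \(g_{ij}^\alpha\). The integral of a strictly positive measurable function over a set of positive measure is positive, so \(x^Tf[G]x>0\), and thus \(f[G]\succ0\).

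The argument is routine. The only point needing care is this last one: positivity of the integral needs strict pointwise positivity on a set of positive mass, not a uniform lower bound. No compactness in \(\alpha\) is needed, which is why I use the pointwise argument.
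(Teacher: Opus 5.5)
Your proposal is correct and follows the paper's proof: you invoke Theorem~\ref{thm:PSD-only-preservers}, exclude \(f\equiv0\) and \(\mu=c\delta_0\) because \(c\mathbf1\mathbf1^T\) is singular for \(n\ge2\), and get strict positivity from the quadratic-form identity, using \(x^TG^{\circ\alpha}x>0\) for \(\alpha>0\) and \(\mu((0,\infty))>0\). Your explicit strict \(2\times2\) test matrix and your measurability remark only make precise two small points that the paper leaves implicit.
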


\begin{proof}
	Assume \textup{(ii)}, and fix
	\[
	G\in\mathcal G_n^{\mathrm{str}}
	\]
	and a nonzero vector \(x\in\mathbb R^n\).  For every \(\alpha>0\),
	Proposition~\ref{prop:green-Hadamard-power} gives
	\[
	G^{\circ\alpha}\in\mathcal G_n^{\mathrm{str}},
	\]
	and hence
	\[
	G^{\circ\alpha}\succ0.
	\]
	Therefore
	\begin{equation}\label{eq:strict-power-quadratic-positive}
		x^TG^{\circ\alpha}x>0,
		\qquad \alpha>0.
	\end{equation}
	
	Using \eqref{eq:strict-PD-representation} and interchanging the finite sum
	with the integral,
	\begin{align}
		x^Tf[G]x
		&=
		\int_{[0,\infty)}
		x^TG^{\circ\alpha}x\,d\mu(\alpha)\notag\\
		&\ge
		\int_{(0,\infty)}
		x^TG^{\circ\alpha}x\,d\mu(\alpha).
		\label{eq:strict-PD-integral}
	\end{align}
	By \eqref{eq:strict-power-quadratic-positive} and
	\eqref{eq:strict-PD-positive-mass}, the right-hand side is strictly
	positive.  Hence
	\[
	x^Tf[G]x>0.
	\]
	Since the nonzero vector \(x\) was arbitrary,
	\[
	f[G]\succ0.
	\]
	
	Conversely, assume \textup{(i)}.  Since \(f\) preserves PSD on the Green
	cone, Theorem~\ref{thm:PSD-only-preservers} says that either
	\(f\equiv0\), or \(f\) has a nonnegative-real-power representation.
	
	The zero function is excluded because
	\[
	f[G]=0
	\]
	is never positive definite.
	
	Hence \eqref{eq:strict-PD-representation} holds for a nonzero finite
	positive Borel measure \(\mu\) on \([0,\infty)\).  If
	\[
	\mu((0,\infty))=0,
	\]
	then \(\mu\) is concentrated at \(0\), so \(f(t)=c\) for some \(c>0\):
	\[
	f(t)=c,
	\qquad t>0.
	\]
	For every \(n\ge2\) and every strict Green matrix \(G\),
	\[
	f[G]=c\mathbf1\mathbf1^T
	\]
	has rank one and is not positive definite, a contradiction.  Therefore
	\[
	\mu((0,\infty))>0.
	\]
\end{proof}

\section{Entrywise maps preserving Green structure}
\label{sec:structure-preservers}

We now require the entrywise image to retain Green structure.  We first give
an intrinsic characterization and then treat the positive, positive
semidefinite, and strict Green classes.

\subsection{An intrinsic characterization}

We use the following characterization to convert preservation of Green
structure into a functional equation.

\begin{proposition}
	\label{prop:green-intrinsic}
	Let \(A=(a_{ij})\) be a symmetric \(n\times n\) matrix with positive
	entries.  The following are equivalent.
	\begin{enumerate}[label=\textup{(\roman*)}]
		\item \(A\) is a Green matrix;
		\item for all \(1\le i\le j\le k\le n\),
		\begin{equation}\label{eq:green-multiplicative-identity}
			a_{ij}a_{jk}=a_{ik}a_{jj}.
		\end{equation}
	\end{enumerate}
\end{proposition}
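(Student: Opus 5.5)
The forward direction (i)\(\Rightarrow\)(ii) is a direct computation from Definition~\ref{def:green}. If \(A=G(p,q)\) and \(i\le j\le k\), then
\[
a_{ij}a_{jk}=(p_iq_j)(p_jq_k)=(p_iq_k)(p_jq_j)=a_{ik}a_{jj}.
\]
This is the same identity already used above to show that \(\mathcal G_3^+\) is not convex.

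For (ii)\(\Rightarrow\)(i), I will construct \(p\) and \(q\) explicitly from the first row and the diagonal. Set
\[
q_k=a_{1k},\qquad p_k=\frac{a_{kk}}{a_{1k}},\qquad k=1,\ldots,n.
\]
Both sequences are positive because every entry of \(A\) is positive. It then suffices to check that \(p_jq_k=a_{jk}\) for all \(j\le k\), since symmetry of \(A\) handles the case \(j>k\).

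The key step applies \eqref{eq:green-multiplicative-identity} with \(i=1\) and \(1\le j\le k\). This gives \(a_{1j}a_{jk}=a_{1k}a_{jj}\), and therefore
\[
a_{jk}=\frac{a_{jj}}{a_{1j}}\,a_{1k}=p_jq_k.
\]
Division by \(a_{1j}\) is legitimate because \(a_{1j}>0\). The case \(j=1\) holds trivially with \(p_1=a_{11}/a_{11}=1\), and the case \(j=k\) gives \(p_kq_k=a_{kk}\). So \(A=G(p,q)\) with positive parameters.

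I do not expect a real obstacle here. The only point needing care is that positivity of the entries is essential, since it is what allows division by \(a_{1j}\). It is also worth noting that only the triples with \(i=1\) are needed for sufficiency, while (i)\(\Rightarrow\)(ii) delivers the full family of identities.
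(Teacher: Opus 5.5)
Your proof is correct and is essentially the paper's argument. The paper also reads \(q\) off the first row and \(p\) off the diagonal, then applies the identity to the triples \((1,i,j)\). The only difference is the normalization: the paper takes \(q_j=a_{1j}/\sqrt{a_{11}}\), so that \(p_1=q_1=\sqrt{a_{11}}\), whereas you take \(q_k=a_{1k}\) and \(p_1=1\), and this choice does not matter.
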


\begin{proof}
	Suppose \(A=G(p,q)\).  If \(i\le j\le k\), then
	\[
	a_{ij}=p_iq_j,\qquad
	a_{jk}=p_jq_k,\qquad
	a_{ik}=p_iq_k,\qquad
	a_{jj}=p_jq_j.
	\]
	Therefore
	\[
	a_{ij}a_{jk}
	=
	p_iq_jp_jq_k
	=
	p_iq_kp_jq_j
	=
	a_{ik}a_{jj},
	\]
	so \eqref{eq:green-multiplicative-identity} holds.
	
	Conversely, suppose \(A\) satisfies
	\eqref{eq:green-multiplicative-identity}.  Define
	\begin{equation}\label{eq:intrinsic-q}
		q_j
		=
		\frac{a_{1j}}{\sqrt{a_{11}}},
		\qquad j=1,\ldots,n,
	\end{equation}
	and
	\begin{equation}\label{eq:intrinsic-p}
		p_i
		=
		\frac{a_{ii}}{q_i},
		\qquad i=1,\ldots,n.
	\end{equation}
	All entries of \(A\) are positive, so \(p_i,q_i>0\).
	
	Fix \(1\le i\le j\le n\).  Applying
	\eqref{eq:green-multiplicative-identity} to \((1,i,j)\) gives
	\begin{equation}\label{eq:intrinsic-identity-step}
		a_{1i}a_{ij}=a_{1j}a_{ii}.
	\end{equation}
	By \eqref{eq:intrinsic-q}--\eqref{eq:intrinsic-identity-step},
	\begin{align*}
		p_iq_j
		&=
		\frac{a_{ii}}{q_i}q_j\\
		&=
		a_{ii}\frac{a_{1j}}{a_{1i}}\\
		&=
		a_{ij}.
	\end{align*}
	Thus \(a_{ij}=p_iq_j\) when \(i\le j\).  By symmetry of \(A\),
	\[
	a_{ij}
	=
	p_{\min(i,j)}q_{\max(i,j)}
	\]
	for all \(i,j\), and hence \(A=G(p,q)\).
\end{proof}

\begin{remark}
	Identity~\eqref{eq:green-multiplicative-identity} is equivalent to
	\[
	a_{ik}
	=
	\frac{a_{ij}a_{jk}}{a_{jj}},
	\qquad i\le j\le k.
	\]
	Therefore, an entrywise preserver of Green structure must preserve this
	multiplicative identity.
\end{remark}

\subsection{Preservers of positive-entry Green matrices}

Let \(\mathcal G_n\) denote the positive-entry Green matrices of order
\(n\), with no monotonicity requirement on their ratio sequences.

We first consider Green matrices with positive entries, without imposing
positive semidefiniteness or monotonicity conditions.

\begin{theorem}
	\label{thm:positive-green-structure}
	Let
	\[
	f:(0,\infty)\longrightarrow(0,\infty).
	\]
	The following are equivalent.
	\begin{enumerate}[label=\textup{(\roman*)}]
		\item for every \(G\in\mathcal G_3\),
		\[
		f[G]\in\mathcal G_3;
		\]
		
		\item for every \(n\ge1\) and \(G\in\mathcal G_n\),
		\[
		f[G]\in\mathcal G_n;
		\]
		
		\item the normalized function
		\begin{equation}\label{eq:normalized-structure-function}
			F(t)=\frac{f(t)}{f(1)}
		\end{equation}
		is positive and multiplicative:
		\begin{equation}\label{eq:normalized-multiplicativity}
			F(ab)=F(a)F(b),
			\qquad a,b>0;
		\end{equation}
		
		\item there are an additive function
		\[
		\varphi:\mathbb R\to\mathbb R
		\]
		and a constant \(c>0\) such that
		\begin{equation}\label{eq:general-structure-form}
			f(t)
			=
			c\exp\bigl(\varphi(\log t)\bigr),
			\qquad t>0.
		\end{equation}
	\end{enumerate}
\end{theorem}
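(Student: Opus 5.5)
I will prove the cycle (ii)\(\Rightarrow\)(i)\(\Rightarrow\)(iii)\(\Rightarrow\)(iv)\(\Rightarrow\)(ii). The implication (ii)\(\Rightarrow\)(i) is trivial. The two implications involving (iii) and (iv) are the standard correspondence between positive multiplicative functions on \((0,\infty)\) and additive functions on \(\mathbb R\). Given (iii), \(F>0\), so we may set \(\varphi(s)=\log F(\e^s)\). Then multiplicativity of \(F\) is exactly additivity of \(\varphi\), and \(f(t)=f(1)F(t)=c\exp(\varphi(\log t))\) with \(c=f(1)>0\). Conversely, given (iv), we have \(\varphi(0)=0\), so \(f(1)=c\) and \(F(t)=\exp(\varphi(\log t))\), which is multiplicative. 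No regularity is needed in either direction.

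For (iv)\(\Rightarrow\)(ii), I will use Proposition~\ref{prop:green-intrinsic}. Let \(G=(g_{ij})\in\mathcal G_n\). Then \(f[G]\) is symmetric with positive entries, and for \(i\le j\le k\) we have \(g_{ij}g_{jk}=g_{ik}g_{jj}\). Applying \(F\), which is multiplicative, gives \(F(g_{ij})F(g_{jk})=F(g_{ik})F(g_{jj})\). Multiplying through by \(c^2\) yields the identity \eqref{eq:green-multiplicative-identity} for \(f[G]\). Hence \(f[G]\) is Green. (Alternatively, \(f[G]=c\,G(F[p],F[q])\) directly, since \(F(p_iq_j)=F(p_i)F(q_j)\).)

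The main step is (i)\(\Rightarrow\)(iii), where I must produce enough \(3\times3\) Green matrices to force multiplicativity. For \(a,b>0\) I take \(p=(1,1,a)\) and \(q=(1,a,ab)\), which gives
\[
G=\begin{pmatrix}1&a&ab\\a&a&ab\\ab&ab&a^2b\end{pmatrix}.
\]
This does not isolate the desired relation, so instead I parametrize by the needed entries. I want \(g_{22}=1\), \(g_{12}=a\), \(g_{23}=b\), \(g_{13}=ab\). Taking \(p=(a,1,p_3)\) and \(q=(q_1,1,b)\) gives \(g_{12}=p_1q_2=a\), \(g_{23}=p_2q_3=b\), \(g_{13}=p_1q_3=ab\), and \(g_{22}=1\). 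The remaining parameters \(q_1,p_3>0\) are free; take \(q_1=p_3=1\). By Proposition~\ref{prop:green-intrinsic} applied to \(f[G]\) with \((i,j,k)=(1,2,3)\), we get \(f(a)f(b)=f(ab)f(1)\). Dividing by \(f(1)^2\) gives \(F(a)F(b)=F(ab)\), and \(F>0\) because \(f\) is positive-valued.

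I expect no genuine obstacle. The only care needed is choosing the test matrix so that the single Green identity \(a_{12}a_{23}=a_{13}a_{22}\) reads exactly as the functional equation. Note also that \(\mathcal G_3\) imposes no ratio condition, so any positive parameters are allowed.
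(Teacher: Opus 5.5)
Your proof is correct and follows essentially the paper's argument. You use the same order-three test matrix, with \(p_1=a\), \(p_2=q_2=1\), \(q_3=b\), to obtain \(f(a)f(b)=f(ab)f(1)\) from the identity for the triple \((1,2,3)\), and your \((\mathrm{iii})\Leftrightarrow(\mathrm{iv})\) via \(\varphi(x)=\log F(\e^x)\) matches the paper. The only differences are cosmetic: you route the converse through the intrinsic characterization, whereas the paper uses the direct factorization \(f[G(p,q)]=G(f(1)F[p],F[q])\) that you give as your alternative, and your discarded first test matrix can simply be deleted.
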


\begin{proof}
	Clearly, \textup{(ii)} implies \textup{(i)}.
	
	Assume \textup{(i)}.  Fix \(a,b>0\), choose
	\[
	p_1=a,\qquad
	p_2=q_2=1,\qquad
	q_3=b,
	\]
	and choose arbitrary \(q_1,p_3>0\).  The resulting positive Green matrix
	\(G\) of order three satisfies
	\begin{equation}\label{eq:structure-test-entries}
		g_{12}=a,\qquad
		g_{23}=b,\qquad
		g_{22}=1,\qquad
		g_{13}=ab.
	\end{equation}
	By \textup{(i)}, \(f[G]\) is again a Green matrix.  Applying
	Proposition~\ref{prop:green-intrinsic} and
	\eqref{eq:green-multiplicative-identity} to the triple \((1,2,3)\) gives
	\begin{equation}\label{eq:structure-functional-equation}
		f(a)f(b)=f(ab)f(1),
		\qquad a,b>0.
	\end{equation}
	Dividing by \(f(1)^2\) proves
	\eqref{eq:normalized-multiplicativity}; hence \textup{(i)} implies
	\textup{(iii)}.
	
	Now assume \textup{(iii)} and fix \(G(p,q)\in\mathcal G_n\).  By
	multiplicativity of \(F\),
	\begin{align}
		f\bigl(p_{\min(i,j)}q_{\max(i,j)}\bigr)
		&=
		f(1)
		F\bigl(p_{\min(i,j)}q_{\max(i,j)}\bigr)\notag\\
		&=
		f(1)
		F\bigl(p_{\min(i,j)}\bigr)
		F\bigl(q_{\max(i,j)}\bigr).
		\label{eq:structure-image-factorization}
	\end{align}
	Define
	\begin{equation}\label{eq:structure-new-parameters}
		P_i=f(1)F(p_i),
		\qquad
		Q_i=F(q_i).
	\end{equation}
	Since \(F>0\), all \(P_i,Q_i\) are positive.  By
	\eqref{eq:structure-image-factorization},
	\[
	f[G(p,q)]
	=
	G(P,Q).
	\]
	Thus \textup{(iii)} implies \textup{(ii)}.
	
	Finally, we prove equivalence of \textup{(iii)} and \textup{(iv)}.  If
	\(F\) is positive and multiplicative, define
	\[
	\varphi(x)=\log F(\e^x).
	\]
	Then
	\[
	\varphi(x+y)
	=
	\log F(\e^{x+y})
	=
	\log\bigl(F(\e^x)F(\e^y)\bigr)
	=
	\varphi(x)+\varphi(y).
	\]
	Thus \(\varphi\) is additive and
	\[
	F(t)=\exp\bigl(\varphi(\log t)\bigr).
	\]
	Taking \(c=f(1)\) gives \eqref{eq:general-structure-form}.
	
	Conversely, if \(f\) has the form \eqref{eq:general-structure-form},
	additivity of \(\varphi\) immediately gives
	\eqref{eq:normalized-multiplicativity}.
\end{proof}

\begin{remark}
	Theorem~\ref{thm:positive-green-structure} imposes neither positive
	semidefiniteness nor monotonicity, so the additive function \(\varphi\)
	may be discontinuous.  If \(f\) is additionally continuous, measurable,
	locally bounded, or monotone on a nondegenerate interval, then
	\[
	\varphi(x)=\alpha x
	\]
	for some \(\alpha\in\mathbb R\), and hence
	\[
	f(t)=ct^\alpha.
	\]
\end{remark}

\subsection{Preservers of positive semidefinite Green structure}

\begin{theorem}
	\label{thm:PSD-green-structure}
	Let
	\[
	f:(0,\infty)\longrightarrow(0,\infty).
	\]
	The following are equivalent.
	\begin{enumerate}[label=\textup{(\roman*)}]
		\item for every \(G\in\mathcal G_3^+\),
		\[
		f[G]\in\mathcal G_3^+;
		\]
		
		\item for every \(n\ge1\) and \(G\in\mathcal G_n^+\),
		\[
		f[G]\in\mathcal G_n^+;
		\]
		
		\item there are \(c>0\) and \(\alpha\ge0\) such that
		\begin{equation}\label{eq:PSD-green-structure-form}
			f(t)=ct^\alpha,
			\qquad t>0.
		\end{equation}
	\end{enumerate}
\end{theorem}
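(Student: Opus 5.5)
The plan is to prove the cycle \textup{(iii)}\(\Rightarrow\)\textup{(ii)}\(\Rightarrow\)\textup{(i)}\(\Rightarrow\)\textup{(iii)}.

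For \textup{(iii)}\(\Rightarrow\)\textup{(ii)}, take \(f(t)=ct^\alpha\). If \(\alpha>0\), Proposition~\ref{prop:green-Hadamard-power} gives \(G^{\circ\alpha}=G(p^\alpha,q^\alpha)\in\mathcal G_n^+\). Multiplying by \(c\) keeps the matrix in \(\mathcal G_n^+\), since we may replace \(p\) by \(cp\) and the ratio sequence is only rescaled by \(c\). If \(\alpha=0\), then \(f[G]=c\mathbf1\mathbf1^T\) is a positive rank-one matrix, which lies in \(\mathcal G_n^+\) by Remark~\ref{rem:positive-rank-one-green}. The implication \textup{(ii)}\(\Rightarrow\)\textup{(i)} is trivial.

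The substantive step is \textup{(i)}\(\Rightarrow\)\textup{(iii)}, which we carry out in three stages.

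\emph{Multiplicativity.} Fix \(a,b>0\) and reuse the test matrix from the proof of Theorem~\ref{thm:positive-green-structure}, namely \(p_1=a\), \(p_2=q_2=1\), \(q_3=b\). Now \(q_1\) and \(p_3\) must be chosen so that \(G\in\mathcal G_3^+\). This requires \(a/q_1\le 1\le p_3/b\), and the choice \(q_1=a\), \(p_3=b\) (or anything larger) satisfies it. Since \(f[G]\) is Green, Proposition~\ref{prop:green-intrinsic} applied to the triple \((1,2,3)\) gives \(f(a)f(b)=f(ab)f(1)\) for all \(a,b>0\). So \(F=f/f(1)\) is positive and multiplicative, and \(\varphi(x)=\log F(\e^x)\) is additive.

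\emph{Monotonicity.} For \(0<a\le b\), the matrix \(\begin{pmatrix}a&a&a\\a&b&b\\a&b&b\end{pmatrix}\) is \(G(p,q)\) with \(q=(1,1,1)\) and \(p=(a,b,b)\). Its ratios are nondecreasing, so it lies in \(\mathcal G_3^+\). Its image lies in \(\mathcal G_3^+\), hence is PSD, and the leading \(2\times2\) minor gives \(f(a)(f(b)-f(a))\ge0\). Since \(f>0\), this yields \(f(a)\le f(b)\). Equivalently, one can read off the ratio condition of the image Green matrix directly.

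\emph{Conclusion.} Because \(f\) is nondecreasing, \(\varphi\) is a monotone additive function. Monotone additive functions are linear, so \(\varphi(x)=\alpha x\), giving \(f(t)=f(1)t^\alpha\). Monotonicity then forces \(\alpha\ge0\), and we take \(c=f(1)>0\).

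The main obstacle is bookkeeping rather than depth. We must check that the multiplicativity test matrix can be placed in \(\mathcal G_3^+\) for \emph{all} \(a,b>0\), including \(a>1\) or \(b<1\); the choice \(q_1=a\), \(p_3=b\) handles every case. We must also get positivity of \(f\) from the hypothesis \(f>0\) rather than from any PSD argument. The regularity step needs only the classical fact that an additive function bounded or monotone on an interval is linear, as noted in the Remark after Theorem~\ref{thm:positive-green-structure}.
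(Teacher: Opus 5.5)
Your proposal is correct and follows essentially the same route as the paper. The same order-three test $p_1=a$, $p_2=q_2=1$, $q_3=b$, combined with Proposition~\ref{prop:green-intrinsic}, gives $f(a)f(b)=f(ab)f(1)$. The leading $2\times2$ minor of your monotonicity matrix (the paper's $M(a,b,u)$ with $u\ge b$, where you take $u=b$) gives $f(a)\le f(b)$. The converse uses Proposition~\ref{prop:green-Hadamard-power} together with the rank-one case $\alpha=0$.

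The only difference is in the multiplicativity test. You take the rank-one instance $q_1=a$, $p_3=b$, whereas the paper takes $q_1>a$, $p_3>b$. The paper's choice makes the test matrix strict, so it can be reused in Theorem~\ref{thm:strict-green-structure}.
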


\begin{proof}
	Clearly, \textup{(ii)} implies \textup{(i)}.
	
	Assume \textup{(i)}.  Fix \(a,b>0\), choose
	\[
	p_1=a,\qquad
	p_2=q_2=1,\qquad
	q_3=b,
	\]
	and then choose
	\[
	q_1>a,\qquad p_3>b.
	\]
	The resulting ratio sequence satisfies
	\[
	\frac{a}{q_1}<1<\frac{p_3}{b},
	\]
	so the resulting matrix is strict Green, and in particular belongs to
	\(\mathcal G_3^+\).  The same test as in
	Theorem~\ref{thm:positive-green-structure} gives
	\begin{equation}\label{eq:PSD-structure-functional-equation}
		f(a)f(b)=f(ab)f(1).
	\end{equation}
	Therefore
	\[
	F(t)=\frac{f(t)}{f(1)}
	\]
	is positive and multiplicative.
	
	We next prove that \(f\) is nondecreasing.  Fix \(0<a\le b\) and choose
	\(u\ge b\).  The matrix
	\begin{equation}\label{eq:PSD-structure-monotonicity-test}
		M(a,b,u)
		=
		\begin{pmatrix}
			a&a&a\\
			a&b&b\\
			a&b&u
		\end{pmatrix}
	\end{equation}
	belongs to \(\mathcal G_3^+\).  Hence
	\[
	f[M(a,b,u)]
	\in\mathcal G_3^+
	\]
	and is in particular positive semidefinite.  Its leading principal minor
	of order two is nonnegative, so
	\[
	f(a)\bigl(f(b)-f(a)\bigr)\ge0.
	\]
	Since \(f(a)>0\),
	\[
	f(a)\le f(b).
	\]
	Thus both \(f\) and \(F\) are nondecreasing.
	
	Define
	\[
	\varphi(x)=\log F(\e^x).
	\]
	Then \(\varphi\) is additive and nondecreasing, so there is
	\(\alpha\ge0\) such that
	\[
	\varphi(x)=\alpha x.
	\]
	Consequently,
	\[
	f(t)=f(1)t^\alpha.
	\]
	Taking \(c=f(1)>0\) gives \eqref{eq:PSD-green-structure-form}.
	
	Conversely, if
	\[
	f(t)=ct^\alpha,
	\qquad c>0,\quad\alpha>0,
	\]
	then Proposition~\ref{prop:green-Hadamard-power} gives
	\[
	f[G]
	=
	cG^{\circ\alpha}\in\mathcal G_n^+
	\]
	for every \(G\in\mathcal G_n^+\).  If \(\alpha=0\), then
	\[
	f[G]=c\mathbf1\mathbf1^T,
	\]
	which is a rank-one positive semidefinite Green matrix.  Thus
	\textup{(iii)} implies \textup{(ii)}.
\end{proof}

\subsection{Preservers of strict Green structure}

\begin{theorem}
	\label{thm:strict-green-structure}
	Let
	\[
	f:(0,\infty)\longrightarrow(0,\infty).
	\]
	The following are equivalent.
	\begin{enumerate}[label=\textup{(\roman*)}]
		\item for every \(G\in\mathcal G_3^{\mathrm{str}}\),
		\[
		f[G]\in\mathcal G_3^{\mathrm{str}};
		\]
		
		\item for every \(n\ge1\) and
		\(G\in\mathcal G_n^{\mathrm{str}}\),
		\[
		f[G]\in\mathcal G_n^{\mathrm{str}};
		\]
		
		\item there are \(c>0\) and \(\alpha>0\) such that
		\begin{equation}\label{eq:strict-green-structure-form}
			f(t)=ct^\alpha,
			\qquad t>0.
		\end{equation}
	\end{enumerate}
\end{theorem}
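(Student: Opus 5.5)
We follow the same pattern as the proof of Theorem~\ref{thm:PSD-green-structure}. Clearly \textup{(ii)} implies \textup{(i)}. For \textup{(iii)}\(\Rightarrow\)\textup{(ii)}, take \(f(t)=ct^\alpha\) with \(c>0\) and \(\alpha>0\). Proposition~\ref{prop:green-Hadamard-power}\,(ii) gives \(G^{\circ\alpha}=G(p^\alpha,q^\alpha)\in\mathcal G_n^{\mathrm{str}}\). Multiplying by \(c\) amounts to replacing \(p\) by \(cp\). This leaves the ratio sequence multiplied by \(c\), so its strict monotonicity is unchanged, and hence \(f[G]\in\mathcal G_n^{\mathrm{str}}\).

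The main work is \textup{(i)}\(\Rightarrow\)\textup{(iii)}. First, the test matrix in the proof of Theorem~\ref{thm:PSD-green-structure} is already strict. That test uses \(p_1=a\), \(p_2=q_2=1\), \(q_3=b\), \(q_1>a\) and \(p_3>b\). Applying Proposition~\ref{prop:green-intrinsic} to the triple \((1,2,3)\) of the Green image yields \(f(a)f(b)=f(ab)f(1)\). Hence \(F=f/f(1)\) is positive and multiplicative, and \(\varphi(x)=\log F(\e^x)\) is additive.

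Second, we need strict monotonicity of \(f\). The matrix \(M(a,b,u)\) from \eqref{eq:PSD-structure-monotonicity-test} has ratios \(1\le b/u\cdot\) (in fact ratios \(a/a=1\), \(b/b=1\), \(u/u=1\)), so it is not strict and cannot be used here. Instead fix \(0<a<b\) and use a strict matrix with \(g_{11}=a\), \(g_{12}=a\), \(g_{22}=b\). Take \(p_1=a\), \(q_1=t\), \(q_2=1\), \(p_2=b\); then \(g_{12}=p_1q_2=a\) and \(g_{22}=b\). The ratios are \(a/t\) and \(b\), and we need \(a/t<b\). Choose \(t=1\), so \(g_{11}=a\) and \(a<b\) holds. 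Complete the matrix to order three with \(q_3=1\) and \(p_3>b\). The image \(f[G]\) lies in \(\mathcal G_3^{\mathrm{str}}\), so it is positive definite by Corollary~\ref{cor:green-positivity-criterion}. Its leading \(2\times2\) minor is then positive: \(f(a)f(b)-f(a)^2>0\). Since \(f(a)>0\), this gives \(f(a)<f(b)\), so \(f\) is strictly increasing.

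Consequently \(\varphi\) is additive and strictly increasing. A monotone additive function is linear, so \(\varphi(x)=\alpha x\) with \(\alpha\ge0\), and strictness forces \(\alpha>0\). Therefore \(f(t)=f(1)t^\alpha\), and we take \(c=f(1)>0\).

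The only real obstacle is the second step. Matrices with equal ratios are not admissible strict inputs, so strict monotonicity must be extracted by hand. The key point is that positive definiteness of the strict image, via Corollary~\ref{cor:green-positivity-criterion}, upgrades the inequality \(f(a)\le f(b)\) to \(f(a)<f(b)\), which excludes the constant case \(\alpha=0\).
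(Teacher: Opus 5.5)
Your proof is correct and follows the paper's argument step for step. The same strict order-three test yields \(f(a)f(b)=f(ab)f(1)\), the leading \(2\times2\) minor of the positive definite image gives \(f(a)<f(b)\), and a strictly increasing additive \(\varphi\) equals \(\alpha x\) with \(\alpha>0\).

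One side remark is wrong, although it plays no role in your logic. You say \(M(a,b,u)\) is not strict, but \(M(a,b,u)=G\bigl((a,b,u),(1,1,1)\bigr)\) has ratios \(a<b<u\) whenever \(a<b<u\), so it is strict. The matrix you build in its place, with \(p=(a,b,p_3)\) and \(q=(1,1,1)\), is exactly \(M(a,b,p_3)\), which is the matrix the paper uses. So the obstacle you describe in your second step does not actually arise.
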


\begin{proof}
	Clearly, \textup{(ii)} implies \textup{(i)}.
	
	Assume \textup{(i)}.  The test matrix used to obtain
	\eqref{eq:PSD-structure-functional-equation} is strict. Hence
	assumption~\textup{(i)} gives
	\[
	f(a)f(b)=f(ab)f(1),
	\qquad a,b>0.
	\]
	Thus \(F=f/f(1)\) is positive and multiplicative.
	
	Fix \(0<a<b\) and choose \(u>b\).  The matrix
	\[
	M(a,b,u)
	=
	\begin{pmatrix}
		a&a&a\\
		a&b&b\\
		a&b&u
	\end{pmatrix}
	\]
	belongs to \(\mathcal G_3^{\mathrm{str}}\).  Hence
	\(f[M(a,b,u)]\) is again strict Green and therefore positive definite.
	Its leading principal minor of order two is positive:
	\[
	f(a)\bigl(f(b)-f(a)\bigr)>0.
	\]
	Thus
	\[
	f(a)<f(b),
	\]
	so \(F\) is strictly increasing.
	
	Consequently,
	\[
	\varphi(x)=\log F(\e^x)
	\]
	is additive and strictly increasing.  Therefore
	\[
	\varphi(x)=\alpha x
	\]
	for some \(\alpha>0\).  Hence
	\[
	f(t)=f(1)t^\alpha.
	\]
	
	Conversely, if
	\[
	f(t)=ct^\alpha,
	\qquad c>0,\quad\alpha>0,
	\]
	then, for every strict Green matrix \(G(p,q)\),
	\[
	f[G(p,q)]
	=
	G(cp^\alpha,q^\alpha).
	\]
	The new ratio sequence is
	\[
	c\left(\frac{p_i}{q_i}\right)^\alpha,
	\]
	which is still strictly increasing.  Thus \(f[G]\) is strict Green.
\end{proof}

\subsection{Optimality of order three}

Every symmetric two-by-two matrix with positive entries,
	\[
	A=
	\begin{pmatrix}
		a&b\\
		b&c
	\end{pmatrix},
	\qquad a,b,c>0,
	\]
	is a Green matrix.  Indeed, take
	\[
	p_1=q_1=\sqrt a,\qquad q_2=b/\sqrt a,
	\qquad p_2=c\sqrt a/b.
	\]
	The two ratios are \(1\) and \(ac/b^2\).  Hence \(A\) is strict Green if
	and only if
	\[
	ac>b^2,
	\]
	which is precisely the condition \(A\succ0\).  We use this elementary
	observation only to establish sharpness of the order-three tests.

\begin{proposition}
	\label{prop:order-three-optimal}
	The function
	\[
	f(t)=\e^t
	\]
	is not of the form \(ct^\alpha\), but it maps every positive-entry PSD
	Green matrix of order two to a PSD Green matrix, and every strict Green
	matrix of order two to a strict Green matrix.  Thus order three in
	Theorems~\ref{thm:PSD-green-structure} and
	\ref{thm:strict-green-structure} cannot be reduced to order two.
\end{proposition}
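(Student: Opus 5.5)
The argument combines the observation just made, that every symmetric positive $2\times2$ matrix is Green, with the Schur product theorem. It has three parts, in this order.

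First, $f(t)=\e^t$ is not of the form $ct^\alpha$. If $\e^t=ct^\alpha$ for all $t>0$, then letting $t\downarrow0$ gives a contradiction. For $\alpha>0$ the right side tends to $0$. For $\alpha=0$ it is constant, and $\e^t$ is not. In both cases the left side tends to $1$ and is nonconstant.

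Second, the PSD case. Take a positive-entry matrix
\[
A=\begin{pmatrix}a&b\\b&c\end{pmatrix}\in\mathcal G_2^+ .
\]
By Corollary~\ref{cor:green-positivity-criterion}, $A\succeq0$, so $ac\ge b^2$. The image $f[A]$ has positive entries, so it is again a $2\times2$ Green matrix by the observation above. For positive semidefiniteness, use the Schur product theorem. Expanding entrywise,
\[
\exp[A]=\sum_{k\ge0}\frac{A^{\circ k}}{k!},
\]
and each Hadamard power $A^{\circ k}$ is PSD. Alternatively, compute directly: the diagonal entries are positive, and
\[
\det \exp[A]=\e^{a+c}-\e^{2b}\ge0,
\]
because AM--GM gives $a+c\ge2\sqrt{ac}\ge 2b$. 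Since $f[A]$ is a PSD Green matrix with positive parameters, Corollary~\ref{cor:green-positivity-criterion} places it in $\mathcal G_2^+$.

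Third, the strict case. If $A$ is strict, then $ac>b^2$. The chain $a+c\ge2\sqrt{ac}>2b$ is now strict, so $\det\exp[A]>0$ and $\exp[A]\succ0$. By the observation above (strict Green is equivalent to $ac>b^2$ for positive symmetric $2\times2$ matrices), $\exp[A]$ is strict Green.

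The only point needing care is this last equivalence between strictness and positive definiteness. It was already verified in the preceding paragraph via the ratios $1$ and $ac/b^2$, so I expect no real obstacle. The sharpness conclusion then follows: $\exp$ satisfies the order-two analogues of conditions (i) in Theorems~\ref{thm:PSD-green-structure} and~\ref{thm:strict-green-structure}, but not condition (iii).
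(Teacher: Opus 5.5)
Your proof is correct and is essentially the paper's argument: every positive symmetric $2\times2$ matrix is Green, and $\det\exp[A]=\e^{a+c}-\e^{2b}$ is nonnegative (positive when $ac>b^2$) by AM--GM, which gives PSD output and, through $ac>b^2$, strict Green output; the Schur-product series $\sum_k A^{\circ k}/k!$ is only an optional second route to the PSD part. Your limit argument as $t\downarrow0$ spells out what the paper merely asserts about $\e^t\neq ct^\alpha$. It covers $\alpha\ge0$, which is all the two theorems need, and $\alpha<0$ with $c>0$ fails the same way because $ct^\alpha\to\infty$.
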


\begin{proof}
	Let
	\[
	A=
	\begin{pmatrix}
		a&b\\
		b&c
	\end{pmatrix}
	\succeq0,
	\qquad a,b,c>0.
	\]
	Then
	\[
	ac\ge b^2.
	\]
	By the arithmetic--geometric mean inequality,
	\[
	a+c\ge2\sqrt{ac}\ge2b.
	\]
	Therefore
	\begin{align}
		\det f[A]
		&=
		\det
		\begin{pmatrix}
			\e^a&\e^b\\
			\e^b&\e^c
		\end{pmatrix}\notag\\
		&=
		\e^{a+c}-\e^{2b}\ge0.
		\label{eq:exp-two-by-two}
	\end{align}
	Since the diagonal entries of \(f[A]\) are positive,
	\[
	f[A]\succeq0.
	\]
	The preceding observation shows that \(f[A]\) is still Green.
	
	If \(A\succ0\), then
	\[
	ac>b^2,
	\]
	and hence
	\[
	a+c\ge2\sqrt{ac}>2b.
	\]
	Thus the inequality in \eqref{eq:exp-two-by-two} is strict, so
	\[
	f[A]\succ0.
	\]
	By the same observation, \(f[A]\) is strict Green.
	
	Finally, \(\e^t\) cannot agree with \(ct^\alpha\) on all of
	\((0,\infty)\), so tests of order two cannot force the power form.
\end{proof}

\section{Concluding remarks}\label{sec:conclusion}

Excluding the zero function where appropriate, the classifications are
\[
\begin{array}{rcl}
	\text{PSD output only}
	&\Longleftrightarrow&
	\displaystyle
	f(t)=\int_{[0,\infty)}t^\alpha\,d\mu(\alpha),\\[3mm]
	\text{Loewner monotonicity for every fixed }q
	&\Longleftrightarrow&
	f'\text{ preserves PSD on the Green class},\\[1mm]
	\text{TN output}
	&\Longleftrightarrow&
	f(t)=ct^\alpha,\quad\alpha\ge0,\\[1mm]
	\text{positive-entry Green matrices}
	&\Longleftrightarrow&
	f/f(1)\text{ positive and multiplicative},\\[1mm]
	\text{PSD Green structure}
	&\Longleftrightarrow&
	f(t)=ct^\alpha,\quad\alpha\ge0,\\[1mm]
	\text{strict Green structure}
	&\Longleftrightarrow&
	f(t)=ct^\alpha,\quad\alpha>0.
\end{array}
\]

Positive rank-one tests give the integral representation through the Hankel
kernel theorem, and tests of order two imply monotonicity and exclude negative
exponents.  For TN output, a nonprincipal minor of an order-three test forces
the representing measure to be a point mass.  The Green-structure results
also use tests of order three, and the exponential example in
Proposition~\ref{prop:order-three-optimal} shows that order two is insufficient.

On each fixed-\(q\) family, the \(C^1\) functions that are monotone in the
Loewner order are precisely the antiderivatives of the PSD preservers in
Theorem~\ref{thm:PSD-only-preservers}.  Whether the same classification holds
when \(A\) and \(B\) have different parameter sequences \(q\) remains open.

The results concern all matrix orders and functions on \((0,\infty)\).
Preservers for Green matrices with zero entries or parameters, and preservers
in a fixed dimension, remain to be studied.

\end{document}